\documentclass[11pt]{amsart}

\usepackage{amsmath,amssymb,amsthm,mathtools,mathrsfs}
\usepackage{enumitem}
\usepackage{hyperref}
\hypersetup{hidelinks}
\usepackage{microtype}
\usepackage{geometry}
\numberwithin{equation}{section}

\newtheorem{theorem}{Theorem}[section]
\newtheorem{proposition}[theorem]{Proposition}
\newtheorem{lemma}[theorem]{Lemma}
\newtheorem{corollary}[theorem]{Corollary}

\theoremstyle{definition}

\theoremstyle{remark}
\newtheorem{remark}[theorem]{Remark}

\newcommand{\Li}{\operatorname{Li}}
\newcommand{\BPS}{\operatorname{BPS}}
\newcommand{\wid}{\operatorname{wid}}
\newcommand{\cwid}{\operatorname{cwid}}

\newcommand{\diag}{\operatorname{diag}}

\title[Cartan-Fej\'er Gram Tomography and Flop Covariance]{Cartan-Fej\'er Gram Tomography and Flop Covariance\\
for BPS Resummed Gromov-Witten Potentials}
\author[X. Li]{Xiaobin Li}
\address{School of Mathematics, Southwest Jiaotong University,
West Zone, High-Tech District, Chengdu, Sichuan 611756, China}
\date{September 16, 2026}

\begin{document}

\begin{abstract}
We introduce a matrix-valued finite difference formalism for the genus zero
BPS resummed local Gromov-Witten potential of a threefold flop.  Factoring
the central difference as
\[
\Delta_\eta=\nabla_\eta^2,
\qquad
\nabla_\eta=T_{h\eta/2}-T_{-h\eta/2},
\]
we prove that the mixed differences
\(
\mathbf H=(\nabla_{\eta_i}\nabla_{\eta_j}F)_{i,j}
\)
admit an exact rank one signed \(q\)-Gram decomposition.  Primitive BPS
classes are therefore detected by rank one coefficient matrices, and a
matrix-valued M\"obius inversion reconstructs all multiplicities on a
primitive ray.

Under a simple threefold flop the Gram forcing is covariant away from the
flopped ray and acquires the universal logarithmic anomaly
\[
\kappa_C(C^+\otimes C^+)\log r,
\qquad
\kappa_C=\sum_{d\ge1}d^3n_{dC}.
\]
After a natural renormalization it becomes flop covariant, while one
classical logarithmic derivative recovers the cubic correction in the
crepant transformation formula.  For a smooth irreducible flopping curve
we distinguish the genus zero GV spectrum \((n_d)\) from the weighted
sequence \(w_d=d^2n_d\), which we call the GV width distribution.  Toda's
noncommutative width is the total mass \(\sum_d w_d\), whereas the flop
anomaly coefficient \(\kappa_C=\sum_d d\,w_d=\sum_d d^3n_d\) is its first
moment, equivalently the third moment of the GV spectrum.  This viewpoint
gives low degree reconstruction formulas and a cohomological width series
from contraction algebra BPS invariants.

For root supported BPS theories, simple root Gram coefficients reconstruct
the signed Cartan matrix.  Applied to the standard ADE foldings, the theory
distinguishes \(B_n\) from \(C_n\), detects the \(F_4\) double bond and
\(D_4\) triality, and yields explicit global \(q\)-Gram discriminants.
\end{abstract}

\maketitle

\section{Introduction and Main Results}

Difference equations provide a compact way to reorganize the multiple cover
expansion of local Gromov-Witten theory; the resolved conifold model was
studied by Alim \cite{Alim}.  For an effective curve class
$\beta$, set
\begin{equation}\label{eq:intro-sector}
\Phi_\beta(\lambda,t)
=
\sum_{k\ge1}
\frac{q^{k\beta}}
{k(2\sin(k\lambda/2))^2},
\qquad
q^\beta=e^{2\pi i\langle\beta,t\rangle}.
\end{equation}
The sine square denominator is the standard multiple cover factor of a
genus zero BPS state; see the original BPS interpretation of
Gopakumar-Vafa \cite{GopakumarVafaI,GopakumarVafaII}, the local
multiple-cover analysis \cite{BKL,Katz}, and the later stable-pair and
vanishing-cycle approaches to BPS/GV invariants
\cite{PandharipandeThomasStable,MaulikToda}.

For a one dimensional exceptional ray, scalar central differences cancel
this denominator and produce Fej\'er Laurent kernels.  In
\cite{LiDifferenceFlops}, the author used this mechanism to organize the
local contribution of a contractible rational curve and to reconstruct its
genus zero GV spectrum by divisor inversion.  The purpose of the present
paper is different: we retain several lattice directions simultaneously
and ask what geometric information is carried by their mixed finite
differences.

The principal new results are the following.  First, the scalar central
difference is lifted to a matrix of half step mixed differences with an
exact finite $\lambda$ rank one signed $q$-Gram decomposition; its classical
limit admits a matrix-valued divisor inversion for BPS data.  Second, and
this is the birational core of the paper, under a simple threefold flop the
Gram observable is exactly covariant off the flopped ray and acquires the
universal rank one logarithmic anomaly
\[
\Bigl(\sum_{d\ge1}d^3n_{dC}\Bigr)
(C^+\otimes C^+)\log r.
\]
Its classical logarithmic derivative is precisely the cubic correction in
the crepant transformation formula.  Third, we package the same raywise GV
data into a width distribution $w_d=d^2n_d$ and a width polynomial
$\mathsf W_C(z)=\sum_d w_dz^d$.  In this language Toda's noncommutative
width is the total mass, while the flop anomaly coefficient is the first
width moment; low degree moments reconstruct the GV spectrum and admit a
cohomological lift through contraction algebra BPS objects.  Fourth, for
root supported BPS theories the primitive Gram coefficients reconstruct
signed Cartan rows, leading to explicit non-simply-laced applications and
global $q$-Gram discriminants.  The scalar Fej\'er mechanism is taken from
\cite{LiDifferenceFlops}; the ADE deformation and folding input is taken
from \cite{BryanGholampour}, and the restricted root and flop transformation
input from \cite{NabijouWemyss}.  None of these background results is
claimed as new here.

Let $\Lambda$ be a curve class lattice, let
$\Lambda^\vee=\operatorname{Hom}(\Lambda,\mathbb Z)$, and consider a
BPS resummed contribution
\begin{equation}\label{eq:BPS-potential}
F(\lambda,t)
=
\sum_{\beta\in\mathcal S}
N_\beta\,\Phi_\beta(\lambda,t).
\end{equation}
Here $N_\beta$ denotes the genus zero BPS multiplicity.  We emphasize that
$F$ denotes the contribution obtained by resumming the multiple covers of
genus zero BPS states; no assertion about unrelated higher genus BPS
sectors is implicit.

For $\eta\in\Lambda^\vee$, put
\[
h=\frac{\lambda}{2\pi},
\qquad
\Delta_\eta=T_{h\eta}-2+T_{-h\eta}.
\]
The scalar operator $\Delta_\eta$ records
$|\langle\beta,\eta\rangle|$ but loses the sign of this pairing.  Our first
observation is that it has a canonical half step factorization
\begin{equation}\label{eq:half-factor}
\nabla_\eta=T_{h\eta/2}-T_{-h\eta/2},
\qquad
\nabla_\eta^2=\Delta_\eta.
\end{equation}
For an integral basis $\eta_1,\ldots,\eta_r$ of $\Lambda^\vee$, define
\begin{equation}\label{eq:H-intro}
\mathbf H(\lambda,q)
=
\bigl(
\nabla_{\eta_i}\nabla_{\eta_j}F
\bigr)_{1\le i,j\le r}.
\end{equation}

For $m\in\mathbb Z$, write
\[
[m]_u=\frac{u^m-u^{-m}}{u-u^{-1}}
\]
for the signed $q$-integer, understood as a Laurent polynomial for integral
$m$.  For a curve class $\beta$, define
\[
\mathbf w_\beta(u)
=
\bigl(
[\langle\beta,\eta_i\rangle]_u
\bigr)_i.
\]
\subsection*{Main results}

The first result upgrades the scalar Cartan--Fej\'er identity to an exact
matrix-valued finite-$\lambda$ decomposition.

\begin{theorem}[Quantum Gram decomposition]\label{thm:quantum-Gram}
For $u_k=e^{ik\lambda/2}$ one has
\begin{equation}\label{eq:quantum-Gram}
\mathbf H(\lambda,q)
=
-\sum_{\beta\in\mathcal S}
N_\beta
\sum_{k\ge1}
\mathbf w_\beta(u_k)\mathbf w_\beta(u_k)^T
\frac{q^{k\beta}}{k}.
\end{equation}
In particular, if $\beta$ is primitive, then
\[
-[q^\beta]\mathbf H
=
N_\beta\,\mathbf w_\beta(u)\mathbf w_\beta(u)^T,
\qquad u=e^{i\lambda/2},
\]
so every primitive coefficient matrix has rank one.  In the classical
limit,
\[
-[q^\beta]\mathbf H^{\rm cl}
=
N_\beta\,\mathbf v_\beta\mathbf v_\beta^T,
\qquad
\mathbf v_\beta=(\langle\beta,\eta_i\rangle)_i.
\]
\end{theorem}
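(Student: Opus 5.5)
The plan is a direct termwise computation. The operators $T_{\pm h\eta/2}$ act diagonally on the monomials $q^{k\beta}$, and the factor they produce cancels the sine-square denominator in \eqref{eq:intro-sector}. We treat $F$ as a formal series in the Novikov variables $q^\beta$, with coefficients meromorphic in $\lambda$, so that $\nabla_{\eta_i}\nabla_{\eta_j}$ may be applied term by term to \eqref{eq:BPS-potential}. (Alternatively, we work on the domain of absolute convergence, where the shifts $t\mapsto t\pm h\eta/2$ preserve convergence.)

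\textbf{Step 1: eigenvalue of a half step.} Let $T_{s\eta}$ denote translation $t\mapsto t+s\eta$. For $m=\langle\beta,\eta\rangle$ we have
\[
T_{\pm h\eta/2}\,q^{k\beta}=e^{\pm 2\pi i k h m/2}q^{k\beta}=u_k^{\pm m}q^{k\beta},
\]
since $2\pi h/2=\lambda/2$. Hence
\[
\nabla_\eta q^{k\beta}=(u_k^{m}-u_k^{-m})q^{k\beta}=(u_k-u_k^{-1})[m]_{u_k}q^{k\beta}.
\]
Note that $[m]_u$ is odd in $m$. Consequently, reversing the sign convention for the translation flips both factors $[m_i]$ and $[m_j]$ in Step 2 and leaves every product unchanged, so the convention does not matter.

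\textbf{Step 2: the mixed difference of one sector.} The operators $\nabla_{\eta_i}$ and $\nabla_{\eta_j}$ commute, because all translations commute. Applying both gives
\[
\nabla_{\eta_i}\nabla_{\eta_j}q^{k\beta}=(u_k-u_k^{-1})^2[\langle\beta,\eta_i\rangle]_{u_k}[\langle\beta,\eta_j\rangle]_{u_k}\,q^{k\beta}.
\]
Now $(u_k-u_k^{-1})^2=(2i\sin(k\lambda/2))^2=-(2\sin(k\lambda/2))^2$. This exactly cancels the denominator $k(2\sin(k\lambda/2))^2$ in $\Phi_\beta$, leaving
\[
-\,[\langle\beta,\eta_i\rangle]_{u_k}[\langle\beta,\eta_j\rangle]_{u_k}\,q^{k\beta}/k.
\]
Summing over $k\ge1$ and over $\beta\in\mathcal S$ with weights $N_\beta$, and assembling the $(i,j)$ entries as the outer product $\mathbf w_\beta(u_k)\mathbf w_\beta(u_k)^T$, yields \eqref{eq:quantum-Gram}. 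The apparent poles of $\Phi_\beta$ at $\sin(k\lambda/2)=0$ disappear, which is why the identity holds at finite $\lambda$ as a statement about Laurent polynomials in $u_k$.

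\textbf{Step 3: primitive coefficients and the classical limit.} Fix a primitive $\beta$. A term $q^{k\beta'}$ with $\beta'\in\mathcal S$ and $k\ge1$ contributes to $[q^\beta]$ only if $k\beta'=\beta$ in $\Lambda$. Primitivity of $\beta$ forces $k=1$ and $\beta'=\beta$. Only the single summand $-N_\beta\mathbf w_\beta(u_1)\mathbf w_\beta(u_1)^T$ survives, with $u_1=u=e^{i\lambda/2}$. This is manifestly of rank at most one, and of rank exactly one when $N_\beta\ne0$ and $\mathbf v_\beta\ne0$. The latter holds because the $\eta_i$ form a basis and $\beta\neq0$.

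For the classical limit, let $\lambda\to0$, so $u\to1$. We have $[m]_u=u^{m-1}+u^{m-3}+\dots+u^{1-m}$ for $m\ge0$, extended by oddness to negative $m$, hence $[m]_u\to m$. The coefficient therefore tends to $N_\beta\mathbf v_\beta\mathbf v_\beta^T$.

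\textbf{Main obstacle.} The only real point of care is the bookkeeping in Step 1. One must check that the normalization $h=\lambda/2\pi$ combined with the half step produces exactly $u_k^{\pm m}$, with $u_k=e^{ik\lambda/2}$ rather than $e^{ik\lambda}$. One must also check that the resulting sign, $(u-u^{-1})^2=-(2\sin)^2$, is what produces the overall minus sign. Everything else follows from linearity and the primitivity argument.
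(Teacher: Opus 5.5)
Your proposal is correct and follows essentially the paper's route: the paper proves the identity by applying the centered mixed difference identity (Lemma~\ref{lem:mixed-CF}) entrywise, which is exactly your Steps 1--2, namely computing the eigenvalue $(u_k-u_k^{-1})[m]_{u_k}$ of $\nabla_\eta$ on $q^{k\beta}$ and cancelling $(u_k-u_k^{-1})^2=-(2\sin(k\lambda/2))^2$ against the multiple cover denominator. Your primitivity argument ($k\beta'=\beta$ forces $k=1$) and the limit $[m]_u\to m$ make explicit what the paper leaves implicit.
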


The classical coefficient matrices also recover all BPS multiplicities on
an effective primitive ray.  Fix a primitive class $\rho$, write
$\mathbf v=\mathbf v_\rho$, and set
\[
\mathbf C_m=-[q^{m\rho}]\mathbf H^{\rm cl},\qquad m\ge1.
\]

\begin{theorem}[Tensor-valued M\"obius inversion]\label{thm:tensor-Mobius}
For every $m\ge1$,
\begin{equation}\label{eq:tensor-Mobius}
N_{m\rho}\,\mathbf v\mathbf v^T
=
\frac1{m^2}
\sum_{r\mid m}
\frac{\mu(r)}r
\mathbf C_{m/r}.
\end{equation}
Thus the classical Gram forcing reconstructs the complete BPS spectrum on
that primitive ray.
\end{theorem}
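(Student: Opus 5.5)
The plan is to read off the coefficient matrices $\mathbf C_m$ directly from the classical limit of Theorem~\ref{thm:quantum-Gram}, and then reduce the identity to ordinary scalar M\"obius inversion along the ray. Since $[m]_u\to m$ as $u\to1$, each entry of $\mathbf w_\beta(u_k)$ tends to $\langle\beta,\eta_i\rangle$ as $\lambda\to0$, for every fixed $k$. Letting $\lambda\to0$ termwise in \eqref{eq:quantum-Gram} (coefficientwise in $q$, so only finitely many terms contribute to each coefficient) gives
\[
\mathbf H^{\rm cl}=-\sum_{\beta\in\mathcal S}N_\beta\,\mathbf v_\beta\mathbf v_\beta^T\sum_{k\ge1}\frac{q^{k\beta}}{k}.
\]
This is consistent with the classical statement already recorded in Theorem~\ref{thm:quantum-Gram}.

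Next I would extract the coefficient of $q^{m\rho}$. A term $q^{k\beta}$ contributes exactly when $k\beta=m\rho$ in $\Lambda$. Because $\rho$ is primitive, this forces $k\mid m$ and $\beta=(m/k)\rho$. Writing $d=m/k$ and using $\mathbf v_{d\rho}=d\,\mathbf v$, I get
\[
\mathbf C_m=\sum_{d\mid m}N_{d\rho}\,\frac{d^2}{m/d}\,\mathbf v\mathbf v^T
=\Bigl(\frac1m\sum_{d\mid m}d^3N_{d\rho}\Bigr)\mathbf v\mathbf v^T .
\]
So every $\mathbf C_m$ is the fixed rank one matrix $\mathbf v\mathbf v^T$ times a scalar $c_m$, and these scalars satisfy $m\,c_m=\sum_{d\mid m}d^3N_{d\rho}$.

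Applying the classical M\"obius inversion formula to the arithmetic functions $m\mapsto m c_m$ and $d\mapsto d^3N_{d\rho}$ yields
\[
m^3N_{m\rho}=\sum_{r\mid m}\mu(r)\,\frac mr\,c_{m/r}.
\]
Dividing by $m^3$ and multiplying by $\mathbf v\mathbf v^T$ gives exactly \eqref{eq:tensor-Mobius}. Because $\mathbf v\neq0$, this determines $N_{m\rho}$ for every $m$, which is the reconstruction statement.

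The main point needing care is the passage to the classical limit. One must interpret $\mathbf H^{\rm cl}$ coefficientwise and justify exchanging $\lambda\to0$ with the $q$-expansion. This is harmless because each $q$-coefficient is a finite sum of Laurent polynomials in $u$. The lattice step that uses primitivity of $\rho$ to restrict the contributing classes is the other place to be careful; the remaining steps are routine bookkeeping.
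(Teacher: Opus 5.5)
Your proposal is correct and takes the same route as the paper. You derive the divisor convolution $\mathbf C_m=\frac1m\sum_{d\mid m}d^3N_{d\rho}\,\mathbf v\mathbf v^T$ from the classical Gram decomposition and then apply ordinary M\"obius inversion to $m\,\mathbf C_m$. Your extra care with the coefficientwise classical limit, and with primitivity of $\rho$ forcing $k\mid m$ and $\beta=(m/k)\rho$, fills in details the paper leaves implicit.
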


The second main result concerns birational geometry.  Threefold flops
originate in the minimal-model program and their ADE, derived and
noncommutative structures have been developed in
\cite{Reid,Kollar,KatzMorrison,BridgelandFlops,VanDenBergh}.  Let
$X\dashrightarrow X_i^+$ be a simple flop of $C_i\subset X$, with flopped
curve $C_i^+\subset X_i^+$.  Let $\mathsf M_i$ be the curve-lattice
transformation and
$\mathsf N_i=(\mathsf M_i^{-1})^\star$ its dual, as in
\cite{NabijouWemyss}, and write
\(
\mathscr H_X(\eta,\xi)=\nabla_\eta\nabla_\xi F_X
\).

\begin{theorem}[Flop covariance with rank one anomaly]\label{thm:flop-cov}
After analytic continuation and the standard change of Novikov variables,
\begin{equation}\label{eq:flop-anomaly}
\begin{aligned}
&
\mathscr H_{X_i^+}(\eta,\xi)
-
\mathscr H_X(\mathsf N_i\eta,\mathsf N_i\xi)
\\
&\qquad\equiv
\kappa_i
(C_i^+\cdot\eta)
(C_i^+\cdot\xi)
\log r_i,
\end{aligned}
\qquad
\kappa_i=\sum_{d\ge1}d^3n_{dC_i,X}.
\end{equation}
Hence the defect of covariance is supported on the flopped ray, has rank
one in the divisor variables, and is independent of $\lambda$.  After the
natural half-anomaly renormalization, the centered forcing is exactly flop
covariant.
\end{theorem}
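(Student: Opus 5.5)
The plan is to split $F_X$ into the contribution of the flopped ray and the rest, treat the two pieces separately, and let Theorem~\ref{thm:quantum-Gram} turn the mixed differences into explicit $q$-series on which the flop acts by analytic continuation. Write $F_X=F_X^{\rm off}+F_X^{\rm ray}$, where $F_X^{\rm ray}=\sum_{d\ge1}n_{dC_i,X}\Phi_{dC_i}$. Do the same for $X_i^+$ with the ray $\{dC_i^+\}$.

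\textbf{Off-ray part.} I will invoke the flop invariance of genus zero GV invariants (Li--Ruan, in the lattice form of \cite{NabijouWemyss}). Under the standard change of Novikov variables it gives $n_{\mathsf M_i\beta,X_i^+}=n_{\beta,X}$ for $\beta$ not a multiple of $C_i$. Transport of curve classes and divisors is by $\mathsf M_i$ and $\mathsf N_i=(\mathsf M_i^{-1})^\star$, so
\[
\langle \mathsf M_i\beta,\eta\rangle=\langle\beta,\mathsf N_i\eta\rangle .
\]
Insert this into \eqref{eq:quantum-Gram}, applied to the bilinear form $\mathscr H(\eta,\xi)$ instead of a basis. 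The off-ray summands of $\mathscr H_{X_i^+}(\eta,\xi)$ and $\mathscr H_X(\mathsf N_i\eta,\mathsf N_i\xi)$ then match term by term: the same multiplicities, the same $q$-integers, and $q^{k\mathsf M_i\beta}$ identified with $q^{k\beta}$. The only care needed is convergence of the continued series, which I will take as part of the hypothesis ``after analytic continuation''.

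\textbf{Ray part, the main step.} Under the flop we have $C_i^+=-\mathsf M_iC_i$ in the identified lattice, $n_{dC_i^+,X_i^+}=n_{dC_i,X}$, and $x:=q^{C_i}$ becomes $q^{C_i^+}=x^{-1}$. Set $a=\langle C_i,\mathsf N_i\eta\rangle$ and $b=\langle C_i,\mathsf N_i\xi\rangle$, so that $C_i^+\cdot\eta=-a$ and $C_i^+\cdot\xi=-b$. Since $[-m]_u=-[m]_u$, the product $[da]_u[db]_u$ is the same on both sides. Expand it as a Laurent polynomial $P_d(u)=\sum_j c_ju^j$. Summing over $k$,
\[
-\sum_{k\ge1}P_d(u_k)\frac{x^{dk}}{k}=\sum_j c_j\log\bigl(1-e^{ij\lambda/2}x^d\bigr),
\]
and the flopped side is the same expression with $x^d$ replaced by $x^{-d}$. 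Use the continuation identity $\log(1-y^{-1})=\log(1-y)-\log(-y)$, valid up to $2\pi i\mathbb Z$. The difference of the two sides becomes
\[
\pm\sum_j c_j\Bigl(\frac{ij\lambda}{2}+d\log x+i\pi\Bigr).
\]
Two facts now finish the computation:
\begin{itemize}
\item $P_d(u)=P_d(u^{-1})$, hence $\sum_j jc_j=0$, and the $\lambda$-dependence cancels;
\item $\sum_j c_j=P_d(1)=d^2ab$.
\end{itemize}
Each $d$ therefore contributes $n_{dC_i}\,d^3ab\log r_i$, where $\log r_i$ denotes $\log x$ up to normalization, and the remaining constants are what ``$\equiv$'' discards. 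Summing over $d$ gives
\[
\kappa_i\,(C_i^+\cdot\eta)(C_i^+\cdot\xi)\log r_i,
\]
which is rank one, supported on the flopped ray, and independent of $\lambda$. I expect the hardest part to be fixing the branch and sign conventions for $\log r_i$, that is, for $\log(-x)$ versus $\log x$, and the overall sign coming from the minus in \eqref{eq:quantum-Gram}. I will handle this by checking against the classical limit. There $\partial_{\log r}$ of the anomaly must reproduce the cubic term $\sum_d d^3n_d$ in the crepant transformation formula.

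\textbf{Renormalization.} Finally, define the centered forcing as $\mathscr H$ minus $\tfrac12\kappa_i(C\cdot\eta)(C\cdot\xi)\log q^{C}$, with $C=C_i$ on $X$ and $C=C_i^+$ on $X_i^+$. The quadratic factor is the same on both sides, since $C_i^+\cdot\eta=-C_i\cdot\mathsf N_i\eta$ enters squared. The logarithm changes sign under $x\mapsto x^{-1}$. Hence the two half-anomalies differ by exactly $\kappa_i(C_i^+\cdot\eta)(C_i^+\cdot\xi)\log r_i$, and this cancels the defect computed above, so the centered forcing is exactly flop covariant.
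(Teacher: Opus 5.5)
Your proposal is correct and follows essentially the same route as the paper. The paper likewise matches off-ray sectors term by term via GV invariance and the lattice duality. On the ray it expands the symmetric Laurent polynomial $[da]_u[db]_u=\sum_jc_ju^j$ into $\Li_1$ terms, inverts them with $\Li_1(z^{-1})\equiv\Li_1(z)+\log z$ and $\sum_jc_j=d^2ab$ (its centered profile inversion lemma), and then cancels the anomaly with the same half-anomaly renormalization.

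Two small clean-ups. First, the sign you left as $\pm$ is already fixed by your own computation. After reindexing $j\mapsto -j$, the flopped-minus-original difference is
\[
-\sum_jc_j\log\bigl(-e^{-ij\lambda/2}x^d\bigr)\equiv-d^3ab\log x=d^3ab\log r_i,
\]
since $r_i=x^{-1}$, so no classical-limit check is needed. Second, in the paper's convention $\mathsf M_i$ maps $A_1(X_i^+)\to A_1(X)$ with $\mathsf M_i(C_i^+)=-C_i$. Your $\mathsf M_i$ should therefore be read as $\mathsf M_i^{-1}$, so that your duality formula agrees with $\mathsf N_i\eta\cdot\beta=\eta\cdot\mathsf M_i^{-1}\beta$ and with $\mathsf N_i=(\mathsf M_i^{-1})^\star$.
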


In the classical limit,
\[
-\mathcal D_\gamma
\mathscr H^{\rm cl}(\eta,\xi)
\]
is the genus zero three point quantum potential.  Applying
$-\mathcal D_\gamma$ to \eqref{eq:flop-anomaly} gives precisely the
rank one cubic correction in the crepant transformation formula.  Hence
the finite $\lambda$ Gram anomaly is a logarithmic primitive of the cubic
wall-crossing anomaly.

The coefficient $\kappa_C$ is more naturally understood together with
the full raywise GV spectrum.  The contraction algebra was introduced by
Donovan--Wemyss \cite{DonovanWemyss}; its relation to the analytic geometry
and curve-counting invariants of flops is developed in
\cite{BrownWemyss,HuaToda}.  For a smooth irreducible flopping curve $C$,
write
\[
n_d=n_{d[C]}.
\]
We refer to the finite sequence $(n_d)_{d\ge1}$ as the genus zero GV
spectrum of $C$.  Motivated by Toda's formula
\cite{TodaWidth}, we introduce the terminology
\[
w_d(C):=d^2n_d
\]
for the \emph{GV width distribution}; this terminology is used in the
present paper for the weighted sequence $(d^2n_d)$ and is not meant to
replace the standard term GV spectrum for $(n_d)$.  Its generating
polynomial is
\begin{equation}\label{eq:width-poly-intro}
\mathsf W_C(z)=\sum_{d\ge1}w_d(C)z^d
=\sum_{d\ge1}d^2n_dz^d,
\qquad
\Theta=z\frac{d}{dz}.
\end{equation}

\begin{theorem}[GV width distribution and moment reconstruction]
\label{thm:width-main}
Assume that $C$ is a smooth irreducible flopping curve and that the genus
zero GV spectrum $(n_d)$ has finite support.  Put
$\ell=\max\{d:n_d\ne0\}$ and
\[
\kappa_C=\sum_{d\ge1}d^3n_d.
\]
Then the following hold.
\begin{enumerate}[label=\textup{(\roman*)}]
\item Toda's noncommutative and commutative widths are encoded by
\[
[z]\mathsf W_C(z)=\cwid(C)=n_1,
\qquad
\mathsf W_C(1)=\wid(C)=\dim_{\mathbb C}A_{\rm con},
\]
and the flop anomaly coefficient is
\begin{equation}\label{eq:width-moment-intro}
(\Theta\mathsf W_C)(1)=\kappa_C.
\end{equation}
Thus $\wid(C)$ is the second moment and $\kappa_C$ the third moment of the
GV spectrum; equivalently, $\wid(C)$ is the total mass and $\kappa_C$ the
first moment of the GV width distribution.

\item If $\wid(C)>0$, the normalized weights
\[
p_d=\frac{w_d(C)}{\wid(C)}
\]
define a probability distribution on the GV degrees, with
\begin{equation}\label{eq:mean-width-degree}
\mathbb E_{\rm wid}[d]
=\frac{\kappa_C}{\wid(C)}.
\end{equation}
In particular,
\[
\wid(C)\le \kappa_C\le \ell\,\wid(C).
\]

\item If $n_d=0$ for $d\ge3$, then the third GV moment carries no new
numerical information beyond the two widths:
\begin{equation}\label{eq:length2-main}
\kappa_C=2\wid(C)-\cwid(C).
\end{equation}
If $n_d=0$ for $d\ge4$, then the triple
$(\cwid(C),\wid(C),\kappa_C)$ reconstructs the complete spectrum by
\begin{equation}\label{eq:length3-main}
 n_1=\cwid(C),\qquad
 n_3=\frac{\kappa_C-2\wid(C)+\cwid(C)}9,
\qquad
 n_2=\frac{3\wid(C)-\kappa_C-2\cwid(C)}4.
\end{equation}
Consequently degree three is the first level at which the Gram anomaly can
supply an independent width moment.

\item Let $\BPS_{A_{\rm con},d}$ be Davison's cohomological BPS objects for
the contraction algebra, with
$\dim\BPS_{A_{\rm con},d}=n_d$ \cite{Davison}.  Then
\begin{equation}\label{eq:coh-width-intro}
\mathbb W_C(z)
=\sum_{d\ge1}d^2[\BPS_{A_{\rm con},d}]z^d
\end{equation}
defines a cohomological width series whose dimension realization is
$\mathsf W_C(z)$.  Its first width moment
\[
\mathbb K_C
=\left.\Theta\mathbb W_C(z)\right|_{z=1}
=\sum_{d\ge1}d^3[\BPS_{A_{\rm con},d}]
\]
satisfies
\begin{equation}\label{eq:coh-kappa-intro}
\dim\mathbb K_C=\kappa_C.
\end{equation}
\end{enumerate}
\end{theorem}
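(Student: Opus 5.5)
The proof is essentially bookkeeping once two external inputs are in place. The first is Toda's formula \cite{TodaWidth}, which for a smooth irreducible flopping curve gives
\[
\dim_{\mathbb C}A_{\rm con}=\sum_{d\ge1}d^2n_d,
\qquad
\cwid(C)=n_1 .
\]
The second is Davison's identification $\dim\BPS_{A_{\rm con},d}=n_d$ \cite{Davison}. Everything else follows from the definition \eqref{eq:width-poly-intro} and linear algebra. I would take the four parts in order.

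For (i), I would read off $[z]\mathsf W_C=1^2n_1=n_1$ and $\mathsf W_C(1)=\sum_d d^2n_d$, then match these with Toda's formula. Since the spectrum has finite support, $\mathsf W_C$ is a polynomial and $\Theta$ acts termwise, so $(\Theta\mathsf W_C)(1)=\sum_d d\cdot d^2n_d=\kappa_C$. The moment interpretations are just restatements of these identities. The only point needing care is the identification $\cwid(C)=n_1$, that is, that the commutative width is the degree one GV invariant. I would cite this directly from Toda rather than reprove it.

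For (ii), I would use that the genus zero GV invariants of a flopping curve are nonnegative, since $n_d$ is the dimension of a BPS cohomology by \cite{Davison}, or by Katz's computation of $n_d$ as a length. Hence $w_d\ge0$, and when $\wid(C)>0$ the weights $p_d$ form a probability distribution. Then
\[
\mathbb E_{\rm wid}[d]=\sum_d d\,p_d=\frac{\kappa_C}{\wid(C)}
\]
by (i). Since the support of $p$ lies in $\{1,\dots,\ell\}$, we get $1\le\mathbb E_{\rm wid}[d]\le\ell$, which gives the two inequalities. This nonnegativity is the one genuinely geometric input beyond Toda, and I expect it to be the only step where a citation must be chosen with care.

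For (iii), I would solve a small linear system. If the support is in $\{1,2\}$, then $\wid=n_1+4n_2$ and $\kappa=n_1+8n_2$, so $\kappa=2\wid-n_1$. If the support is in $\{1,2,3\}$, the system $\cwid=n_1$, $\wid=n_1+4n_2+9n_3$, $\kappa=n_1+8n_2+27n_3$ is triangular after eliminating $n_1$. Subtracting twice the second equation from the third gives $\kappa-2\wid+n_1=9n_3$. Back substitution then gives $4n_2=\wid-n_1-9n_3=3\wid-\kappa-2n_1$, which is \eqref{eq:length3-main}. The ``consequently'' statement is the observation that in the length two case $\kappa_C$ is determined by the widths, while in the length three case it is not.

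For (iv), I would regard $\mathbb W_C$ as a formal series with coefficients in the Grothendieck group (or graded vector spaces) of the BPS objects; it is finite by finite support. Dimension is additive and multiplicative for integer multiples, so it is a ring homomorphism to $\mathbb Z$ on this monoid. It sends $\mathbb W_C$ to $\mathsf W_C$ and commutes with $\Theta$ and with evaluation at $z=1$, so $\dim\mathbb K_C=(\Theta\mathsf W_C)(1)=\kappa_C$ by (i).
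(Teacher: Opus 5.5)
Your proposal is correct and follows essentially the same route as the paper. The paper uses Toda's width formula with termwise application of $\Theta$ for (i), nonnegativity of the $n_d$ for the probability interpretation and the bounds in (ii), the same elimination $9n_3=\kappa_C-2\wid(C)+\cwid(C)$ for (iii), and termwise dimension realization via Davison's identity for (iv); the only difference is that you name a source for the nonnegativity of $n_d$, which the paper invokes simply as ``positivity''.
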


Theorem~\ref{thm:width-main} isolates the enumerative content of the
rank one wall-crossing term in Theorem~\ref{thm:flop-cov}.  The chain of
ideas is
\[
\boxed{
\begin{gathered}
\text{width polynomial}\ \longrightarrow\ \text{third GV moment}\\
\longrightarrow\ \text{rank one flop anomaly}\ \longrightarrow\
\text{cohomological width}
\end{gathered}}
\]
The point is not merely terminological.  The ordinary width
$\wid(C)=\sum d^2n_d$ records the total weighted mass of the GV spectrum,
whereas $\kappa_C=\sum d^3n_d$ is more sensitive to higher degree BPS
sectors.  The flop anomaly therefore detects information that is invisible
to the total width once degree three sectors are allowed.

Our fourth theme is root theoretic reconstruction.  Suppose the BPS support
is a reduced root system with simple roots $\alpha_1,\ldots,\alpha_r$, take
the simple coroots as difference directions, and let
$A=(a_{ij})$ be the Cartan matrix.  Put
\[
\mathbf c_j(u)=([a_{j1}]_u,\ldots,[a_{jr}]_u)^T,
\qquad
A_u=([a_{ji}]_u)_{j,i},
\]
\[
D_N=\operatorname{diag}(N_{\alpha_1},\ldots,N_{\alpha_r}),
\qquad
\mathcal G(u)=-\sum_{j=1}^r[q^{\alpha_j}]\mathbf H.
\]

\begin{theorem}[Cartan reconstruction and global $q$-Gram factorization]
\label{thm:Cartan-main}\label{thm:qCartan-row}\label{thm:signed-Dynkin}\label{thm:global-Gram}
For every simple root $\alpha_j$,
\begin{equation}\label{eq:qCartan-row}
-[q^{\alpha_j}]\mathbf H
=
N_{\alpha_j}\,\mathbf c_j(u)\mathbf c_j(u)^T.
\end{equation}
If $N_{\alpha_j}\ne0$ for all simple roots, these Laurent polynomial
profiles determine the complete signed Cartan matrix through
\begin{equation}\label{eq:signed-ratio}
\frac{
\bigl(-[q^{\alpha_j}]\mathbf H\bigr)_{ij}
}{
\bigl(-[q^{\alpha_j}]\mathbf H\bigr)_{jj}
}
=
\frac{[a_{ji}]_u}{[2]_u},
\end{equation}
and, at $u=1$,
\begin{equation}\label{eq:signed-classical}
a_{ji}
=
2
\frac{
\bigl(-[q^{\alpha_j}]\mathbf H^{\rm cl}\bigr)_{ij}
}{
\bigl(-[q^{\alpha_j}]\mathbf H^{\rm cl}\bigr)_{jj}
}.
\end{equation}
Moreover,
\begin{equation}\label{eq:global-Gram}
\mathcal G(u)=A_u^TD_NA_u,
\end{equation}
so
\[
\det\mathcal G(u)
=
\left(\prod_jN_{\alpha_j}\right)\det(A_u)^2.
\]
\end{theorem}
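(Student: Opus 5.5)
The plan is to obtain everything from Theorem~\ref{thm:quantum-Gram} by specializing to the simple roots and choosing the simple coroots $\eta_i=\alpha_i^\vee$ as difference directions.

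\textbf{Step 1: the row identity \eqref{eq:qCartan-row}.} With this choice the pairing is $\langle\alpha_j,\alpha_i^\vee\rangle=a_{ji}$, with the indexing convention of $\mathbf c_j$. Hence the vector $\mathbf w_{\alpha_j}(u)$ of Theorem~\ref{thm:quantum-Gram} is exactly $\mathbf c_j(u)$. Since a simple root is primitive in the root lattice, the primitive case of Theorem~\ref{thm:quantum-Gram} gives \eqref{eq:qCartan-row} at once. The one point to check is that no other support class contributes to the monomial $q^{\alpha_j}$. That is, I need $k\beta=\alpha_j$ with $\beta\in\mathcal S$ and $k\ge1$ to force $k=1$ and $\beta=\alpha_j$. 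In a reduced root system, $\alpha_j/k$ is not a root for $k\ge2$, so this holds. I expect this to be the only place where reducedness is genuinely used, and I will state it explicitly. If $\Lambda$ is larger than the root lattice, primitivity is still fine, because simple roots are part of a basis of the root lattice and I take $\Lambda$ to be generated by them.

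\textbf{Step 2: the signed ratios.} Read off the $(i,j)$ and $(j,j)$ entries of the rank-one matrix in \eqref{eq:qCartan-row}. They are $N_{\alpha_j}[a_{ji}]_u[a_{jj}]_u$ and $N_{\alpha_j}[a_{jj}]_u^2$. Since $a_{jj}=2$ and $[2]_u=u+u^{-1}\ne0$ as a Laurent polynomial, dividing gives \eqref{eq:signed-ratio}, provided $N_{\alpha_j}\ne0$. Specializing at $u=1$, where $[m]_1=m$, gives \eqref{eq:signed-classical}. This is consistent with the classical statement of Theorem~\ref{thm:quantum-Gram}, since $\mathbf v_{\alpha_j}$ is the $j$-th Cartan row. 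Because $j$ ranges over all simple roots, every entry $a_{ji}$ is recovered, signs included.

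\textbf{Step 3: the global factorization.} Sum \eqref{eq:qCartan-row} over $j$:
\[
\mathcal G(u)=\sum_j N_{\alpha_j}\mathbf c_j\mathbf c_j^T .
\]
The vector $\mathbf c_j$ is the $j$-th row of $A_u=([a_{ji}]_u)_{j,i}$ written as a column, that is, the $j$-th column of $A_u^T$. The sum $\sum_j N_{\alpha_j}\mathbf c_j\mathbf c_j^T$ is therefore $A_u^TD_NA_u$. Taking determinants then gives the stated formula $\det\mathcal G=\prod_jN_{\alpha_j}\det(A_u)^2$.

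I expect no real obstacle. The only care needed is index bookkeeping, namely keeping $a_{ji}$ against $a_{ij}$ and $A_u$ against $A_u^T$ straight in Steps 1 and 3, together with the support argument of Step 1.
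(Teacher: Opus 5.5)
Your proposal is correct and follows the paper's argument essentially step for step: you identify $\mathbf w_{\alpha_j}(u)$ with $\mathbf c_j(u)$ via $\eta_i=\alpha_i^\vee$, invoke the primitive case of Theorem~\ref{thm:quantum-Gram}, divide the $(i,j)$ and $(j,j)$ entries using $a_{jj}=2$ and $N_{\alpha_j}\ne0$, and sum the rank one matrices to obtain $A_u^TD_NA_u$. Your explicit check that only $(\beta,k)=(\alpha_j,1)$ contributes to $q^{\alpha_j}$ is left implicit in the paper, and it holds either by primitivity of $\alpha_j$ in $\Lambda$ or by $\mathcal S\subseteq\Phi$ together with reducedness.
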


As applications, we use standard ADE deformation and folding results of
Bryan-Gholampour \cite{BryanGholampour}, together with the classical
ADE/invariant-theoretic background \cite{KatzMorrison,Slodowy} and the
restricted-root description of threefold flops in \cite{NabijouWemyss}.  These are inputs,
not claims of novelty.  Restriction to invariant K\"ahler loci gives the
standard foldings
\[
A_{2n-1}\to C_n,\qquad
D_{n+1}\to B_n,\qquad
E_6\to F_4,\qquad
D_4\to G_2.
\]
The resulting Gram forcing detects root lengths and bond orientations,
distinguishes $B_n$ from $C_n$, and records $D_4$ triality by the signed
$q$-integer $[-3]_u$.  The $B_n/C_n$ family admits the following closed
form discriminant, which will be proved in Section~4.

\begin{theorem}[$B/C$ determinant]\label{thm:BC-det}
For the standard signed $q$-Cartan matrices of types $B_n$ and $C_n$, set
\(
\delta_n(u)=\det A_u(B_n)=\det A_u(C_n)
\).
Then, for $n\ge2$,
\begin{equation}\label{eq:BC-det}
\delta_n(u)
=
[2]_u\bigl([n]_u-[n-1]_u\bigr)
=
u^{-n}\frac{(u^2+1)(u^{2n-1}+1)}{u+1}.
\end{equation}
\end{theorem}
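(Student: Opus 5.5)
The plan is to compute $\delta_n(u)$ directly from the tridiagonal shape of the signed $q$-Cartan matrix, using a three-term recurrence along the Dynkin chain and then simplifying the resulting $q$-integers.

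First I record the entries of $A_u=([a_{ji}]_u)$. On the diagonal, $a_{jj}=2$ gives $[2]_u=u+u^{-1}$. For a simple bond, $a_{ji}=-1$ gives $[-1]_u=-1$. For the double bond between nodes $n-1$ and $n$, one entry equals $[-1]_u=-1$ and its transpose partner equals $[-2]_u=-[2]_u$; which of the two sits above the diagonal depends on whether the type is $B_n$ or $C_n$. All other entries vanish, so $A_u$ is tridiagonal in the standard ordering. The determinant of a tridiagonal matrix depends only on the diagonal entries and on the products $b_jc_j$ of symmetric off-diagonal pairs. These products are $(-1)(-1)=1$ for simple bonds and $(-1)(-[2]_u)=[2]_u$ for the double bond, in both types. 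Since $A_u(C_n)=A_u(B_n)^T$ in any case, this confirms that $\delta_n$ is well defined.

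Next I consider the leading principal minors $P_k$ of the $A_k$-chain, that is, the first $k$ nodes, for $k\le n-1$. They satisfy
\[
P_k=[2]_uP_{k-1}-P_{k-2},\qquad P_0=1,\quad P_1=[2]_u .
\]
The $q$-integers satisfy the same recurrence, since $[2]_u[k]_u=[k+1]_u+[k-1]_u$. By induction, therefore, $P_k=[k+1]_u$. Expanding $\det A_u$ along the last node with the standard continuant recurrence, and using the double-bond product $[2]_u$ in place of $1$, gives
\[
\delta_n=[2]_uP_{n-1}-[2]_uP_{n-2}=[2]_u\bigl([n]_u-[n-1]_u\bigr),
\]
which is the first equality. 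For $n=2$ this uses $P_0=1$ and reads $[2]_u([2]_u-1)$, consistent with a direct $2\times2$ computation.

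For the closed form, I write the difference over the common denominator $u-u^{-1}$ and multiply numerator and denominator by $u^{n}$:
\[
[n]_u-[n-1]_u=\frac{u^{-n}\,(u^{2n}-u^{2n-1}+u-1)}{u-u^{-1}}
=\frac{u^{-n}(u-1)(u^{2n-1}+1)}{u^{-1}(u-1)(u+1)} .
\]
Cancelling and multiplying by $[2]_u=(u^2+1)/u$ yields $u^{-n}(u^2+1)(u^{2n-1}+1)/(u+1)$.

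The only delicate point is the bookkeeping of which off-diagonal entry carries $[-2]_u$, which differs between $B_n$ and $C_n$. The product observation makes this irrelevant to the determinant, so I expect no genuine obstacle; the remaining check is the sign convention $[-m]_u=-[m]_u$.
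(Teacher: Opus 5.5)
Your proposal is correct and follows the paper's proof: the continuant recurrence gives $P_k=[k+1]_u$ for the simply laced block, and the double-bond off-diagonal product $[2]_u$ yields $\delta_n=[2]_u(P_{n-1}-P_{n-2})$. The only addition is that you write out the Laurent simplification, which the paper just calls elementary; your factorization $u^{2n}-u^{2n-1}+u-1=(u-1)(u^{2n-1}+1)$ is right.
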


The paper is organized in four sections.  The present section states the
main results.  Section~2 develops the
Cartan--Fej\'er difference formalism and centered Gram tomography, beginning
with the geometric setup and ending with tensor-valued divisor inversion.
Section~3 proves flop covariance, identifies the logarithmic rank one
wall-crossing anomaly, and proves Theorem~\ref{thm:width-main}, including
its numerical moment reconstruction and cohomological width lift.
Section~4 reconstructs the signed Cartan data from primitive Gram
coefficients, applies the construction to the four standard
non-simply-laced ADE foldings and their global $q$-Gram discriminants, and
concludes with further directions.

\section{Cartan--Fej\'er differences and centered Gram tomography}

\subsection{Geometric and formal setup}

\subsubsection{Formal difference setup}

Let $\Lambda$ be a free abelian group of rank $r$, with dual lattice
\[
\Lambda^\vee=\operatorname{Hom}(\Lambda,\mathbb Z).
\]
Fix a pointed effective monoid $\Lambda_+\subset\Lambda$.  All Novikov
series are understood in the completion of the group algebra with respect
to $\Lambda_+$.

The K\"ahler variable $t$ is taken on its universal cover.  For
\[
q^\beta=e^{2\pi i\langle\beta,t\rangle}
\]
and $\eta\in\Lambda^\vee$, the half step translation is defined
coefficientwise by
\begin{equation}\label{eq:half-shift-def}
T_{h\eta/2}q^\beta
=
e^{i\lambda\langle\beta,\eta\rangle/2}q^\beta.
\end{equation}
Thus $\eta/2$ need not belong to the integral dual lattice.

We use the BPS sectors \eqref{eq:intro-sector} and the potential
\eqref{eq:BPS-potential} throughout.

\subsubsection{Threefold flops and restricted roots}

For a smooth threefold flopping contraction, the classical birational
framework goes back to \cite{Reid,Kollar,KatzMorrison}; derived and
noncommutative descriptions were developed in
\cite{BridgelandFlops,VanDenBergh}.  The nonzero genus-zero GV classes are
governed by ADE restricted roots.  Nabijou-Wemyss
\cite{NabijouWemyss} characterize the nonzero GV support in this way and
describe its transformation under flop; related GV wall-crossing is also
developed in \cite{TodaWallCrossing}.  They also construct the relevant
curve and divisor lattice transformations and prove the crepant
transformation formula used in Section~3.

For the standard ADE surface resolutions, Bryan-Gholampour
\cite{BryanGholampour} use generic deformation to reduce the curve theory
to isolated $(-1,-1)$ curves indexed by positive roots.  Their root-system
formulas and standard non-simply-laced foldings are used as input in
Section~4, in the classical folding framework of \cite{Slodowy}.

\subsubsection{Admissible foldings}

For the admissible foldings
\[
A_{2n-1}/\mathbb Z_2\to C_n,\qquad
D_{n+1}/\mathbb Z_2\to B_n,
\]
\[
E_6/\mathbb Z_2\to F_4,\qquad
D_4/\mathbb Z_3\to G_2,
\]
let $\mathcal O_\alpha$ be an ADE root orbit and set
\[
\bar\alpha
=
\frac1{|\mathcal O_\alpha|}
\sum_{\alpha'\in\mathcal O_\alpha}\alpha'.
\]
For these admissible foldings, roots in one orbit are pairwise orthogonal;
compare the ADE folding and simultaneous-resolution viewpoints in
\cite{Slodowy,KatzMorrison,BryanGholampour}.
If ADE roots have squared length two, then
\begin{equation}\label{eq:orbit-length}
(\bar\alpha,\bar\alpha)
=
\frac2{|\mathcal O_\alpha|},
\qquad
\bar\alpha^\vee
=
\sum_{\alpha'\in\mathcal O_\alpha}\alpha'.
\end{equation}
Thus orbit averages give folded roots and orbit sums give folded coroots.

In the standard unit ADE root potential, restriction to the invariant
K\"ahler locus groups sectors orbitwise.  The resulting folded coefficient
is
\begin{equation}\label{eq:folded-multiplicity}
d_\beta
=
|\mathcal O_\beta|
=
\frac2{(\beta,\beta)}.
\end{equation}

\subsection{Scalar Cartan--Fej\'er differences}

For $m\ge1$, define the Fej\'er Laurent kernel
\begin{equation}\label{eq:Fejer}
K_m(z)
=
\sum_{j=-(m-1)}^{m-1}
(m-|j|)z^j,
\qquad
K_0=0.
\end{equation}
Then
\[
K_m(z)
=
z^{-(m-1)}(1+z+\cdots+z^{m-1})^2
\]
and
\begin{equation}\label{eq:Fejer-circle}
K_m(e^{ix})
=
\left(
\frac{\sin(mx/2)}{\sin(x/2)}
\right)^2.
\end{equation}

For $\eta\in\Lambda^\vee$, define
\[
\Delta_\eta=T_{h\eta}-2+T_{-h\eta}.
\]

\begin{lemma}[Scalar Cartan--Fej\'er identity]\label{lem:scalar-CF}
Let $a=\langle\beta,\eta\rangle$.  Then
\[
\Delta_\eta\Phi_\beta
=
-
\sum_{k\ge1}
K_{|a|}(e^{ik\lambda})
\frac{q^{k\beta}}k.
\]
\end{lemma}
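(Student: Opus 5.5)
The plan is a termwise computation on the multiple cover expansion of $\Phi_\beta$, using the coefficientwise definition \eqref{eq:half-shift-def} of the translations. Applied to $q^{k\beta}$, the full step $T_{\pm h\eta}$ multiplies by $e^{\pm ik\lambda a}$, where $a=\langle\beta,\eta\rangle$, because $\langle k\beta,\eta\rangle=ka$. Writing $x=k\lambda$, I therefore first establish
\[
\Delta_\eta q^{k\beta}=\bigl(e^{ixa}-2+e^{-ixa}\bigr)q^{k\beta}=-\bigl(2\sin(xa/2)\bigr)^2q^{k\beta}.
\]
Since $\Delta_\eta$ acts diagonally on monomials, it commutes with the infinite sum over $k$ in the completed Novikov ring. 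The formal convergence is automatic, because each coefficient of $q^{k\beta}$ is only rescaled.

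Next I divide by the multiple cover denominator $k(2\sin(x/2))^2$. The $k$-th term becomes
\[
-\frac{\sin^2(xa/2)}{\sin^2(x/2)}\,\frac{q^{k\beta}}{k}.
\]
The squared ratio is even in $a$, so I may replace $a$ by $|a|$. By the circle formula \eqref{eq:Fejer-circle} with $m=|a|$ and argument $x=k\lambda$, the ratio equals $K_{|a|}(e^{ik\lambda})$. Summing over $k\ge1$ then gives the stated identity.

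Two points need care; neither is a real obstacle.

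\emph{The case $a=0$.} Here $\Delta_\eta q^{k\beta}=0$, so the left side vanishes, and the right side vanishes by the convention $K_0=0$.

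\emph{Interpretation of the identity.} The division by $\sin^2(k\lambda/2)$ should be read as an identity of Laurent polynomials in $z=e^{ik\lambda}$, which removes the apparent poles at $k\lambda\in2\pi\mathbb Z$. For this I use the factored form $K_m(z)=z^{-(m-1)}(1+\cdots+z^{m-1})^2$. Together with $-(2\sin(x/2))^2=z-2+z^{-1}=z^{-1}(z-1)^2$, it gives the polynomial identity
\[
(z^{m}-2+z^{-m})=z^{-1}(z-1)^2K_m(z).
\]
This identity is checked directly, since $(z-1)(1+\cdots+z^{m-1})=z^m-1$. It makes the cancellation of the denominator exact.
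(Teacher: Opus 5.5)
Your proof is correct and matches the paper's argument: act with $\Delta_\eta$ on the mode $q^{k\beta}$ to get the factor $-4\sin^2(ka\lambda/2)$, divide by the multiple cover denominator, and apply the Fej\'er circle formula with $m=|a|$. Your separate treatment of $a=0$ and your Laurent-polynomial check $z^m-2+z^{-m}=z^{-1}(z-1)^2K_m(z)$, which shows the apparent poles cancel exactly, are valid details that the paper leaves implicit.
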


\begin{proof}
On the mode $q^{k\beta}$,
\[
\Delta_\eta
=
e^{ika\lambda}-2+e^{-ika\lambda}
=
-4\sin^2(ka\lambda/2).
\]
Division by the multiple cover denominator and
\eqref{eq:Fejer-circle} give the result.
\end{proof}

This is the multivariable form of the scalar Fej\'er mechanism developed
in \cite{LiDifferenceFlops}.  We recall it only to motivate the mixed
construction below.

\subsection{Centered Gram tomography}

\subsubsection{\texorpdfstring{Signed $q$-integers and mixed differences}{Signed q-integers and mixed differences}}

For $\eta\in\Lambda^\vee$, define
\[
\nabla_\eta=T_{h\eta/2}-T_{-h\eta/2}.
\]
Then $\nabla_\eta^2=\Delta_\eta$.

For $m\in\mathbb Z$, set
\[
[m]_u=\frac{u^m-u^{-m}}{u-u^{-1}}.
\]
For integral $m$ this is a Laurent polynomial, regular at $u=\pm1$.
Moreover
\[
[-m]_u=-[m]_u,
\qquad
K_{|m|}(u^2)=[m]_u^2.
\]

\begin{lemma}[Centered mixed difference identity]\label{lem:mixed-CF}
Let
\[
a=\langle\beta,\eta\rangle,
\qquad
b=\langle\beta,\xi\rangle,
\qquad
u_k=e^{ik\lambda/2}.
\]
Then
\begin{equation}\label{eq:mixed-identity}
\nabla_\eta\nabla_\xi\Phi_\beta
=
-
\sum_{k\ge1}
[a]_{u_k}[b]_{u_k}
\frac{q^{k\beta}}k.
\end{equation}
\end{lemma}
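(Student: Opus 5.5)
The plan is to mirror the proof of Lemma~\ref{lem:scalar-CF}, working mode by mode in the Novikov expansion.

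First, by \eqref{eq:half-shift-def} each half step translation acts diagonally on monomials. On $q^{k\beta}$ we have $\langle k\beta,\eta\rangle=ka$, so
\[
\nabla_\eta q^{k\beta}=\bigl(e^{ika\lambda/2}-e^{-ika\lambda/2}\bigr)q^{k\beta}=2i\sin(ka\lambda/2)\,q^{k\beta}.
\]
In the same way $\nabla_\xi$ acts on $q^{k\beta}$ by the scalar $2i\sin(kb\lambda/2)$.

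These operators commute, since they are diagonal. Hence on the mode $q^{k\beta}$,
\[
\nabla_\eta\nabla_\xi=(2i)^2\sin(ka\lambda/2)\sin(kb\lambda/2)=-4\sin(ka\lambda/2)\sin(kb\lambda/2).
\]

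Next we divide by the multiple cover denominator $(2\sin(k\lambda/2))^2$ from \eqref{eq:intro-sector}. The coefficient of $q^{k\beta}/k$ becomes
\[
-\frac{\sin(ka\lambda/2)}{\sin(k\lambda/2)}\cdot\frac{\sin(kb\lambda/2)}{\sin(k\lambda/2)}.
\]
With $u_k=e^{ik\lambda/2}$ we have $u_k^m-u_k^{-m}=2i\sin(km\lambda/2)$. So each ratio is exactly $[m]_{u_k}=(u_k^m-u_k^{-m})/(u_k-u_k^{-1})$, taken with $m=a$ and $m=b$ respectively. This gives \eqref{eq:mixed-identity}.

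It remains to justify applying the operators termwise to the infinite series, which is the only step needing any care. Since the operators act coefficientwise on the Novikov completion, termwise application is legitimate. Where $\sin(k\lambda/2)$ vanishes, the identity $[m]_u\in\mathbb Z[u^{\pm1}]$ shows that the right side is a Laurent polynomial in $u_k$, so the equality extends by continuity in $\lambda$.

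The degenerate cases need no separate treatment. If $a=0$ or $b=0$, both sides vanish, consistent with $[0]_u=0$. Signs are handled automatically by $[-m]_u=-[m]_u$, since $\sin$ is odd.

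Finally, as a consistency check, setting $\xi=\eta$ recovers Lemma~\ref{lem:scalar-CF} via $\nabla_\eta^2=\Delta_\eta$ and $K_{|a|}(u^2)=[a]_u^2$.
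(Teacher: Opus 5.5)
Your proposal is correct and follows the paper's proof essentially verbatim. You compute the eigenvalues $2i\sin(ka\lambda/2)$ and $2i\sin(kb\lambda/2)$ of $\nabla_\eta,\nabla_\xi$ on $q^{k\beta}$, multiply them, and divide by $(2\sin(k\lambda/2))^2$ to get $-[a]_{u_k}[b]_{u_k}$; your extra remarks on termwise application, the removable points where $\sin(k\lambda/2)=0$, and the $\xi=\eta$ consistency check are sound additions that the paper omits.
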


\begin{proof}
On $q^{k\beta}$,
\[
\nabla_\eta
=
2i\sin(ka\lambda/2),
\qquad
\nabla_\xi
=
2i\sin(kb\lambda/2).
\]
Their product is
\[
-4\sin(ka\lambda/2)\sin(kb\lambda/2).
\]
After division by $(2\sin(k\lambda/2))^2$, the quotient is
$-[a]_{u_k}[b]_{u_k}$.
\end{proof}

\subsubsection{Quantum Gram decomposition}

Fix an integral basis $\eta_1,\ldots,\eta_r$ of $\Lambda^\vee$ and define
\[
\mathbf H(\lambda,q)
=
(H_{ij})_{i,j},
\qquad
H_{ij}
=
\nabla_{\eta_i}\nabla_{\eta_j}F.
\]
For $\beta\in\Lambda$, define
\[
\mathbf w_\beta(u)
=
\begin{pmatrix}
[\langle\beta,\eta_1\rangle]_u\\
\vdots\\
[\langle\beta,\eta_r\rangle]_u
\end{pmatrix}.
\]

\begin{proof}[Proof of Theorem~\ref{thm:quantum-Gram}]
Apply Lemma~\ref{lem:mixed-CF} to every matrix entry.
\end{proof}

\begin{corollary}[Primitive rank one identity]\label{cor:primitive-rank-one}
If $\beta$ is primitive, then
\begin{equation}\label{eq:primitive-qgram}
-[q^\beta]\mathbf H
=
N_\beta
\mathbf w_\beta(u)\mathbf w_\beta(u)^T,
\qquad
u=e^{i\lambda/2}.
\end{equation}
Hence every $2\times2$ minor vanishes.
\end{corollary}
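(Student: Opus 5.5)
The plan is to extract the $q^\beta$ coefficient from the quantum Gram decomposition of Theorem~\ref{thm:quantum-Gram}, which has already been established entrywise via Lemma~\ref{lem:mixed-CF}. Write
\[
-\mathbf H=\sum_{\gamma\in\mathcal S}N_\gamma\sum_{k\ge1}\mathbf w_\gamma(u_k)\mathbf w_\gamma(u_k)^T\frac{q^{k\gamma}}{k}.
\]
The monomial $q^{\beta}$ can only arise from pairs $(\gamma,k)$ with $k\gamma=\beta$ in $\Lambda$. Since $\Lambda$ is free and $\beta$ is primitive, $k\gamma=\beta$ forces $k=1$ and $\gamma=\beta$. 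Indeed, $\beta=k\gamma$ with $k\ge2$ would contradict primitivity.

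The steps are as follows. First, I check that the coefficient extraction is legitimate in the Novikov completion with respect to $\Lambda_+$. Each $q^{k\gamma}$ with $\gamma$ effective is a single monomial, and only finitely many pairs $(\gamma,k)$ contribute to a given class. In fact exactly one pair contributes here, so taking $[q^\beta]$ commutes with the sum. Second, the surviving term is $N_\beta\mathbf w_\beta(u_1)\mathbf w_\beta(u_1)^T$, with $u_1=e^{i\lambda/2}=u$ and factor $1/k=1$. This gives \eqref{eq:primitive-qgram}. If $\beta\notin\mathcal S$ we read $N_\beta=0$, and both sides vanish.

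The rank-one consequence is linear algebra over the field of Laurent polynomials in $u$ (or at any specialization of $\lambda$). The matrix $N_\beta\mathbf w\mathbf w^T$ has $(i,j)$ entry $N_\beta w_iw_j$. A $2\times2$ minor with rows $i,i'$ and columns $j,j'$ is
\[
N_\beta^2\bigl(w_iw_jw_{i'}w_{j'}-w_iw_{j'}w_{i'}w_j\bigr)=0,
\]
where I write $w_i=[\langle\beta,\eta_i\rangle]_u$. I would also note that the rank is exactly one when $N_\beta\ne0$ and some $\langle\beta,\eta_i\rangle\ne0$. Such an $i$ exists because $\beta\ne0$ and the $\eta_i$ form a basis of $\Lambda^\vee$, and $[m]_u\ne0$ as a Laurent polynomial for $m\neq0$.

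The only genuine obstacle is the primitivity bookkeeping: I must ensure that no multiple-cover contribution $k\ge2$ from another sector lands on $q^\beta$. This is exactly what primitivity excludes. For non-primitive $\beta=m\rho$, the divisors $k\mid m$ would each contribute a rank-one term, and their sum generally has higher rank. That is why the statement is restricted to primitive classes.
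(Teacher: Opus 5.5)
Your argument is correct and follows the paper's route. The corollary is read off from Theorem~\ref{thm:quantum-Gram}, which is Lemma~\ref{lem:mixed-CF} applied entrywise. Primitivity isolates the single term $(\gamma,k)=(\beta,1)$, and the vanishing of the $2\times2$ minors is exactly the identity you wrote.

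Your closing remark is wrong, however: non-primitive coefficients do not acquire higher rank. On each monomial $q^{\gamma}$ the operator $\nabla_{\eta_i}$ acts by the scalar $2i\sin(\lambda\langle\gamma,\eta_i\rangle/2)$. So every $[q^{\gamma}]\mathbf H$ is a scalar multiple of $\mathbf s_\gamma\mathbf s_\gamma^T$, where $\mathbf s_\gamma=(\sin(\lambda\langle\gamma,\eta_i\rangle/2))_i$; classically this is \eqref{eq:tensor-convolution}. Primitivity is needed only to identify that scalar with $N_\beta$ alone, not to obtain rank one.
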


\begin{remark}[Basis dependence]
At finite $\lambda$, $\mathbf H$ is a matrix-valued $q$-Gram observable
attached to the chosen integral difference basis.  Since
$[m+n]_u\ne[m]_u+[n]_u$ in general, it is not an ordinary bilinear tensor
under arbitrary integral basis changes.  The classical limit below is
tensorial.
\end{remark}

\subsubsection{Classical charge tensors}

Let $\mathbf H^{\rm cl}=\mathbf H|_{\lambda=0}$ and set
\[
\mathbf v_\beta
=
(\langle\beta,\eta_1\rangle,\ldots,
\langle\beta,\eta_r\rangle)^T.
\]

\begin{corollary}[Classical Gram decomposition]\label{cor:classical-Gram}
\begin{equation}\label{eq:classical-Gram}
\mathbf H^{\rm cl}(q)
=
-
\sum_{\beta\in\mathcal S}
N_\beta
\mathbf v_\beta\mathbf v_\beta^T
\Li_1(q^\beta).
\end{equation}
For primitive $\beta$,
\[
-[q^\beta]\mathbf H^{\rm cl}
=
N_\beta\mathbf v_\beta\mathbf v_\beta^T.
\]
\end{corollary}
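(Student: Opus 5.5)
The plan is to set $\lambda=0$ in the quantum Gram decomposition of Theorem~\ref{thm:quantum-Gram}, taking the limit termwise. Two facts drive this. First, the signed $q$-integer is a Laurent polynomial in $u$ for integral $m$, and evaluating at $u=1$ gives $[m]_1=m$. This follows either as the limit of $(u^m-u^{-m})/(u-u^{-1})$ as $u\to1$ or from the explicit expansion $[m]_u=u^{m-1}+u^{m-3}+\cdots+u^{-(m-1)}$ for $m\ge0$, extended by $[-m]_u=-[m]_u$. Hence $u_k=e^{ik\lambda/2}\to1$ gives $\mathbf w_\beta(u_k)\to\mathbf v_\beta$ for every $k$.

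Substituting into \eqref{eq:quantum-Gram} gives
\[
\mathbf H^{\rm cl}
=-\sum_{\beta\in\mathcal S}N_\beta\,\mathbf v_\beta\mathbf v_\beta^T\sum_{k\ge1}\frac{q^{k\beta}}{k}.
\]
The inner sum is $\Li_1(q^\beta)$, which is \eqref{eq:classical-Gram}. The one point needing care is the legitimacy of setting $\lambda=0$ inside the infinite sums. I would handle it formally in the Novikov completion with respect to $\Lambda_+$. Each coefficient $[q^\gamma]\mathbf H$ is a finite sum over pairs $(\beta,k)$ with $k\beta=\gamma$, because $\Lambda_+$ is pointed. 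So each coefficient is a finite sum of Laurent polynomials in $u$, and evaluation at $u=1$ commutes with extracting coefficients. This also shows that $\mathbf H^{\rm cl}$ is well defined even though $F$ itself has poles at $\lambda=0$: the differences $\nabla_{\eta_i}\nabla_{\eta_j}$ cancel the $\sin^{-2}$ denominator, as in Lemma~\ref{lem:mixed-CF}.

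For the primitive statement, take the coefficient of $q^\beta$. A pair $(\beta',k)$ contributes only when $k\beta'=\beta$. Primitivity of $\beta$ forces $k=1$ and $\beta'=\beta$, so the only contribution is $N_\beta\mathbf v_\beta\mathbf v_\beta^T$ from the $k=1$ term of $\Li_1$. Equivalently, this is the $u=1$ specialization of Corollary~\ref{cor:primitive-rank-one}.

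I expect no real obstacle. The only subtlety is the justification of the termwise limit, which the coefficientwise finiteness argument above settles.
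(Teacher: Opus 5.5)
Your proposal is correct and follows the paper's route. Both obtain the corollary as the $\lambda=0$ (equivalently $u=1$, $[m]_1=m$) specialization of Theorem~\ref{thm:quantum-Gram}: the inner $k$-sum becomes $\Li_1(q^\beta)$, and primitivity leaves only the $k=1$ term. Your coefficientwise justification of the termwise limit is a useful addition that the paper leaves implicit. One small correction: finiteness of the pairs $(\beta,k)$ with $k\beta=\gamma$ comes from $\gamma\neq0$ having finite divisibility in $\Lambda$, not from pointedness of $\Lambda_+$. This does not affect the argument.
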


If $\beta$ is primitive in $\Lambda$, then $\mathbf v_\beta$ is primitive
in the coordinate lattice.  Therefore
\begin{equation}\label{eq:gcd-multiplicity}
N_\beta
=
\gcd_{i,j}
\left|
[q^\beta]H_{ij}^{\rm cl}
\right|.
\end{equation}
After division by $N_\beta$, the rank one matrix determines the charge
vector up to sign.  A chosen effective chamber fixes the sign.

\subsubsection{Tensor-valued divisor inversion}

Fix a primitive effective class $\rho$ and set $\mathbf v=\mathbf v_\rho$.
For $m\ge1$ define
\[
\mathbf C_m
=
-[q^{m\rho}]\mathbf H^{\rm cl}.
\]
The sector $d\rho$ contributes to $q^{m\rho}$ exactly when $d\mid m$,
and $\mathbf v_{d\rho}=d\mathbf v$.  Hence
\begin{equation}\label{eq:tensor-convolution}
\mathbf C_m
=
\frac1m
\sum_{d\mid m}
d^3N_{d\rho}\,
\mathbf v\mathbf v^T.
\end{equation}

\begin{proof}[Proof of Theorem~\ref{thm:tensor-Mobius}]
Multiply \eqref{eq:tensor-convolution} by $m$ and apply ordinary
M\"obius inversion to the divisor convolution.
\end{proof}

\section{Flop covariance and BPS width moments}

\subsection{Flop transformations}

Let
\[
X\dashrightarrow X_i^+
\]
be a simple flop of the irreducible curve $C_i\subset X$, and let
$C_i^+\subset X_i^+$ be the flopped curve.  We use the curve-lattice
transformation
\[
\mathsf M_i:A_1(X_i^+)\to A_1(X)
\]
and its dual
\[
\mathsf N_i=(\mathsf M_i^{-1})^\star:
H^2(X_i^+)\to H^2(X)
\]
from \cite{NabijouWemyss}; compare also the broader GV wall-crossing
framework in \cite{TodaWallCrossing}.  They satisfy
\[
\mathsf M_i(C_i^+)=-C_i
\]
and
\begin{equation}\label{eq:MN-duality}
\mathsf N_i\eta\cdot\beta
=
\eta\cdot\mathsf M_i^{-1}\beta.
\end{equation}
The Novikov variables are related by
\begin{equation}\label{eq:Novikov-flop}
q^\beta
=
r^{\mathsf M_i^{-1}\beta},
\qquad
q_i=r_i^{-1}.
\end{equation}
The GV invariants are reindexed off the flopped ray, while on the ray
\[
n_{dC_i^+,X_i^+}
=
n_{dC_i,X}.
\]

For divisor directions $\eta,\xi$, write
\[
\mathscr H_X(\eta,\xi)
=
\nabla_\eta\nabla_\xi F_X.
\]
All analytic continuations below are taken on a simply connected covering
of the relevant complement of the pole arrangement.  We write
\[
A\equiv B
\]
if $A-B$ is independent of all Novikov variables.

\subsection{Covariance away from the flopped ray}

\begin{proposition}[Off-ray covariance]\label{prop:offray}
After the variable change \eqref{eq:Novikov-flop},
\[
\mathscr H_{X_i^+}^{\circ}(\eta,\xi)
=
\mathscr H_X^{\circ}(\mathsf N_i\eta,\mathsf N_i\xi),
\]
where $\circ$ denotes the contribution of classes not lying on the
flopped ray.
\end{proposition}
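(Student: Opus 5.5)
The plan is to compare the two off-ray potentials termwise, using Lemma~\ref{lem:mixed-CF}. The off-ray part of $F_{X_i^+}$ is
\[
F^\circ_{X_i^+}=\sum_{\beta^+\notin\mathbb Z C_i^+}N^{+}_{\beta^+}\Phi_{\beta^+}(\lambda,t^+),
\]
so Lemma~\ref{lem:mixed-CF} gives
\[
\mathscr H^\circ_{X_i^+}(\eta,\xi)=-\sum_{\beta^+}N^+_{\beta^+}\sum_{k\ge1}[\langle\beta^+,\eta\rangle]_{u_k}[\langle\beta^+,\xi\rangle]_{u_k}\frac{r^{k\beta^+}}{k}.
\]
On the $X$ side the same lemma expresses $\mathscr H^\circ_X(\mathsf N_i\eta,\mathsf N_i\xi)$ as a sum over off-ray classes $\beta\in A_1(X)$, with coefficients $[\mathsf N_i\eta\cdot\beta]_{u_k}[\mathsf N_i\xi\cdot\beta]_{u_k}$ and monomials $q^{k\beta}/k$. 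The proof then amounts to matching these two sums class by class.

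The matching uses the bijection $\beta=\mathsf M_i\beta^+$ between off-ray classes, in three steps.
\begin{enumerate}
\item The GV reindexing from \cite{NabijouWemyss} gives $N_{\mathsf M_i\beta^+,X}=N^+_{\beta^+}$ for $\beta^+$ off the flopped ray. Since $\mathsf M_i(C_i^+)=-C_i$, the linear map $\mathsf M_i$ sends the ray $\mathbb Z C_i^+$ onto $\mathbb Z C_i$, so off-ray classes correspond to off-ray classes.
\item By linearity, $q^{k\beta}=r^{\mathsf M_i^{-1}(k\beta)}=r^{k\beta^+}$ under \eqref{eq:Novikov-flop}.
\item The duality \eqref{eq:MN-duality} gives $\mathsf N_i\eta\cdot\beta=\eta\cdot\mathsf M_i^{-1}\beta=\eta\cdot\beta^+$, and likewise for $\xi$. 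Hence the signed $q$-integers agree for every $k$, even though $[\cdot]_u$ is not additive. This works because $\mathsf N_i$ acts on the divisor directions themselves rather than on a chosen basis, so the basis-dependence issue raised in the Remark on basis dependence does not occur.
\end{enumerate}
Summing over $\beta^+$ and $k$ then gives the claimed identity.

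The main obstacle is making sense of the identity as an equality of series, since $\mathsf M_i$ does not preserve effective cones. A class that is effective on $X$ may correspond to $\beta^+$ with a negative $C_i^+$-component. The expansion in $r$ is therefore a different Novikov completion from the expansion in $q$, and a formal termwise identity has to be justified. My first approach is to use finiteness of the off-ray GV support: by the restricted-root description in \cite{NabijouWemyss}, each fibre of the projection killing the $C_i$-direction meets only finitely many nonzero classes. The off-ray sum can then be organised as a finite sum over $k$-tuples in each transverse degree, so both sides are the same formal series in the variables transverse to the ray, with coefficients that are Laurent polynomials in $q_i=r_i^{-1}$. For these coefficients analytic continuation across $q_i\mapsto r_i^{-1}$ is trivial. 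If this finiteness were unavailable, I would instead work on the simply connected cover fixed in the setup and argue that each off-ray sector $\Phi_\beta$ continues termwise, with no singularity at $q_i=1$ because $\beta$ is not a multiple of $C_i$. Only the on-ray sectors produce the $\operatorname{Li}_1$-type branching that gives the logarithm in Theorem~\ref{thm:flop-cov}.
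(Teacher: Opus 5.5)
Your proposal is correct and is essentially the paper's proof. Both match the summands of \eqref{eq:quantum-Gram} term by term through the bijection $\beta=\mathsf M_i\beta^+$ of off-ray classes, using the invariance of the GV multiplicities, $q^{\mathsf M_i\beta^+}=r^{\beta^+}$, and the duality \eqref{eq:MN-duality}. Your discussion of Novikov completions is extra rigor that the paper leaves implicit, and the worry is milder than you suggest: the reindexing carries the off-ray GV support of $X_i^+$ onto that of $X$, so both sides are the same finite combination of $\Li_1$ functions.
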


\begin{proof}
For an off-ray class $\gamma$ on $X_i^+$, the corresponding class on $X$
is $\mathsf M_i\gamma$.  The GV multiplicity is unchanged under this
reindexing, \eqref{eq:MN-duality} identifies the two lattice pairings, and
$q^{\mathsf M_i\gamma}=r^\gamma$.  Thus the summands in
\eqref{eq:quantum-Gram} agree term by term.
\end{proof}

\subsection{Inversion of the centered profile}

For integers $a,b$, define
\[
\mathscr L_{a,b}(Q;\lambda)
=
\sum_{k\ge1}
[a]_{u_k}[b]_{u_k}\frac{Q^k}{k}.
\]

\begin{lemma}[Centered profile inversion]\label{lem:centered-inversion}
After analytic continuation from $Q=0$ to $Q=\infty$,
\begin{equation}\label{eq:centered-inversion}
\mathscr L_{a,b}(Q^{-1};\lambda)
\equiv
\mathscr L_{a,b}(Q;\lambda)
+
ab\log Q.
\end{equation}
\end{lemma}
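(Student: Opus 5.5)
The plan is to reduce $\mathscr L_{a,b}$ to a finite sum of ordinary logarithms by expanding the signed $q$-integers as Laurent polynomials. Then the elementary inversion $\log(1-x^{-1})=\log(1-x)-\log(-x)$ applies term by term.

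\emph{Step 1: Laurent expansion.} For $a>0$ we have $[a]_u=\sum_{j}u^{j}$ with $j$ running over $-(a-1),-(a-3),\dots,a-1$. For $a<0$ we use $[a]_u=-[-a]_u$, and $[0]_u=0$. Hence
\[
[a]_u[b]_u=\sum_{m\in\mathbb Z}c_m u^m
\]
with finitely many integer coefficients $c_m$. Each factor is invariant under $u\mapsto u^{-1}$, so $c_{-m}=c_m$. Evaluating at $u=1$, where $[a]_1=a$ (the paper notes regularity at $u=\pm1$), gives $\sum_m c_m=ab$. If $ab=0$ both sides of \eqref{eq:centered-inversion} agree trivially, so assume $ab\neq0$.

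\emph{Step 2: closed form.} Since $u_k^m=(e^{im\lambda/2})^k$, we interchange the finite $m$-sum with the $k$-sum for $|Q|<1$. This gives
\[
\mathscr L_{a,b}(Q;\lambda)=-\sum_m c_m\log\bigl(1-e^{im\lambda/2}Q\bigr).
\]
This expression provides the analytic continuation of $\mathscr L_{a,b}$ in $Q$ on a simply connected cover of the complement of the finitely many points $Q=e^{-im\lambda/2}$, in the sense fixed in Section~3.

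\emph{Step 3: inversion.} Substituting $Q^{-1}$ and reindexing $m\mapsto -m$ using $c_{-m}=c_m$, we get
\[
\mathscr L_{a,b}(Q^{-1};\lambda)=-\sum_m c_m\log\bigl(1-(e^{im\lambda/2}Q)^{-1}\bigr).
\]
With $x=e^{im\lambda/2}Q$, the identity $1-x^{-1}=-(1-x)/x$ gives, on the chosen branch,
\[
-\log(1-x^{-1})=-\log(1-x)+\log Q+\tfrac{im\lambda}{2}+\log(-1).
\]
Summing against $c_m$ yields $\mathscr L_{a,b}(Q;\lambda)+\bigl(\sum_m c_m\bigr)\log Q$ plus the constant $\sum_m c_m\bigl(\log(-1)+im\lambda/2\bigr)$. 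This constant is independent of $Q$; its $\lambda$-part even vanishes by the symmetry $c_m=c_{-m}$. By Step 1, $\sum_m c_m=ab$, which gives \eqref{eq:centered-inversion} modulo constants.

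The only delicate point is the bookkeeping of branches of the logarithms along the continuation path from $Q=0$ to $Q=\infty$. Different paths or branches change each $\log$ by an integer multiple of $2\pi i$, which is a $Q$-independent constant. So the relation $\equiv$, taken on the simply connected cover as stipulated, absorbs all such ambiguities, and no further estimate is needed.
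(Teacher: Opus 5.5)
Your proposal is correct and follows the paper's own proof essentially step for step. Both expand $[a]_u[b]_u=\sum_m c_m u^m$ with $c_{-m}=c_m$, rewrite $\mathscr L_{a,b}$ as $\sum_m c_m\Li_1(e^{im\lambda/2}Q)$, apply $\Li_1(z^{-1})\equiv\Li_1(z)+\log z$, let the symmetry cancel the linear phase term, and use $\sum_m c_m=ab$. Your explicit tracking of the $\log(-1)$ and branch constants just spells out what the paper absorbs into the relation $\equiv$.
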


\begin{proof}
The Laurent polynomial $P_{a,b}(u)=[a]_u[b]_u$ is symmetric:
$P_{a,b}(u^{-1})=P_{a,b}(u)$.  Write
\[
P_{a,b}(u)=\sum_{m=-M}^M c_mu^m,
\qquad
c_m=c_{-m}.
\]
Then
\[
\mathscr L_{a,b}(Q;\lambda)
=
\sum_m c_m\Li_1(Qe^{im\lambda/2}).
\]
Using
\[
\Li_1(z^{-1})
\equiv
\Li_1(z)+\log z,
\]
symmetry cancels the linear phase term, and
\[
\sum_mc_m=P_{a,b}(1)=ab.
\]
This proves \eqref{eq:centered-inversion}.
\end{proof}

\subsection{The rank one flop anomaly}

Set
\begin{equation}\label{eq:kappa}
\kappa_i
=
\sum_{d\ge1}d^3n_{dC_i,X}.
\end{equation}

\begin{proof}[Proof of Theorem~\ref{thm:flop-cov}]
By Proposition~\ref{prop:offray}, only the flopped ray contributes to the
difference.  Put
\[
a=C_i^+\cdot\eta,\qquad
b=C_i^+\cdot\xi.
\]
Since $\mathsf M_i(C_i^+)=-C_i$,
\[
C_i\cdot\mathsf N_i\eta=-a,\qquad
C_i\cdot\mathsf N_i\xi=-b.
\]
The two signs cancel in the product of signed $q$-integers.

For the degree $d$ ray sector, the new chamber contains
$-\mathscr L_{da,db}(r_i^d;\lambda)$, while the analytically continued old
chamber contains
$-\mathscr L_{da,db}(r_i^{-d};\lambda)$.  By
Lemma~\ref{lem:centered-inversion},
\[
\mathscr L_{da,db}(r_i^{-d};\lambda)
\equiv
\mathscr L_{da,db}(r_i^d;\lambda)
+
d^3ab\log r_i.
\]
Multiplying by $n_{dC_i}$ and summing over $d$ gives
\eqref{eq:flop-anomaly}.
\end{proof}

Thus the defect of covariance is supported on one ray, has rank one in the
divisor variables, is independent of $\lambda$, and is controlled by the
single cubic GV moment $\kappa_i$.

\subsection{A length two Laufer wall}

We illustrate Theorem~\ref{thm:flop-cov} on the standard length two Laufer
geometry \cite{Laufer} and its spectrum \cite{BrownWemyss,Collinucci}
\[
(n_1,n_2)=(5,1).
\]
Then
\[
\kappa_C
=
1^3n_1+2^3n_2
=
5+8
=
13.
\]
Let
\[
a=C^+\cdot\eta,
\qquad
b=C^+\cdot\xi.
\]
The flopped ray contribution in the new chamber is
\[
-5\,\mathscr L_{a,b}(r;\lambda)
-\mathscr L_{2a,2b}(r^2;\lambda),
\]
whereas the analytically continued old chamber contribution is
\[
-5\,\mathscr L_{a,b}(r^{-1};\lambda)
-\mathscr L_{2a,2b}(r^{-2};\lambda).
\]
Lemma~\ref{lem:centered-inversion} gives
\[
\mathscr L_{a,b}(r^{-1};\lambda)
\equiv
\mathscr L_{a,b}(r;\lambda)+ab\log r
\]
and
\[
\mathscr L_{2a,2b}(r^{-2};\lambda)
\equiv
\mathscr L_{2a,2b}(r^2;\lambda)+8ab\log r.
\]
Therefore
\[
\boxed{
\mathscr H_{X^+}^{\rm ray}(\eta,\xi)
-
\mathscr H_X^{\rm ray}(\mathsf N\eta,\mathsf N\xi)
\equiv
13ab\log r.
}
\]
This example exhibits explicitly how the degree two BPS sector contributes
eight units to the logarithmic anomaly, whereas it contributes only four
units to the ordinary noncommutative width.  Thus the wall-crossing
coefficient detects the third rather than the second GV moment.

\subsection{Renormalized covariance}

Define
\[
\widehat{\mathscr H}_{X,i}(\eta,\xi)
=
\mathscr H_X(\eta,\xi)
-
\frac{\kappa_i}{2}
(C_i\cdot\eta)(C_i\cdot\xi)\log q_i
\]
and
\[
\widehat{\mathscr H}_{X_i^+,i}(\eta,\xi)
=
\mathscr H_{X_i^+}(\eta,\xi)
-
\frac{\kappa_i}{2}
(C_i^+\cdot\eta)(C_i^+\cdot\xi)\log r_i.
\]

\begin{corollary}[Renormalized flop covariance]\label{cor:renorm}
\[
\widehat{\mathscr H}_{X_i^+,i}(\eta,\xi)
\equiv
\widehat{\mathscr H}_{X,i}
(\mathsf N_i\eta,\mathsf N_i\xi).
\]
\end{corollary}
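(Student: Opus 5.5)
The plan is to subtract the two counterterms from the anomaly relation \eqref{eq:flop-anomaly} of Theorem~\ref{thm:flop-cov}. Writing out the definitions,
\[
\widehat{\mathscr H}_{X_i^+,i}(\eta,\xi)-\widehat{\mathscr H}_{X,i}(\mathsf N_i\eta,\mathsf N_i\xi)
=
\bigl[\mathscr H_{X_i^+}(\eta,\xi)-\mathscr H_X(\mathsf N_i\eta,\mathsf N_i\xi)\bigr]
-\tfrac{\kappa_i}{2}(C_i^+\cdot\eta)(C_i^+\cdot\xi)\log r_i
+\tfrac{\kappa_i}{2}(C_i\cdot\mathsf N_i\eta)(C_i\cdot\mathsf N_i\xi)\log q_i .
\]
By Theorem~\ref{thm:flop-cov}, the bracket is $\equiv\kappa_i ab\log r_i$, where $a=C_i^+\cdot\eta$ and $b=C_i^+\cdot\xi$.

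Next I rewrite the last counterterm in the variables of $X_i^+$. By \eqref{eq:MN-duality} and $\mathsf M_i(C_i^+)=-C_i$ we have $C_i\cdot\mathsf N_i\eta=-a$ and $C_i\cdot\mathsf N_i\xi=-b$. The product of these is $ab$, since the signs cancel exactly as in the proof of Theorem~\ref{thm:flop-cov}. By \eqref{eq:Novikov-flop}, $q_i=r_i^{-1}$, so $\log q_i=-\log r_i$ on the chosen branch. The third term therefore equals $-\tfrac{\kappa_i}{2}ab\log r_i$.

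Adding the three pieces gives $\kappa_i ab\log r_i-\tfrac{\kappa_i}{2}ab\log r_i-\tfrac{\kappa_i}{2}ab\log r_i=0$ modulo Novikov-independent terms. This is the claimed $\equiv$.

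The only delicate point is the identity $\log q_i=-\log r_i$ under analytic continuation. On the universal cover, where $t$ lives and $\log q_i=2\pi i\langle C_i,t\rangle$, a change of branch shifts this by a constant $2\pi i n$. Such a constant is independent of the Novikov variables, so it is absorbed by $\equiv$. I will also note that the renormalizing terms are the symmetric split of the anomaly, half attributed to each chamber.
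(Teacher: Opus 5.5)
Your proposal is correct and follows the paper's proof. The paper likewise uses $q_i=r_i^{-1}$ and the identity $(C_i\cdot\mathsf N_i\eta)(C_i\cdot\mathsf N_i\xi)=(C_i^+\cdot\eta)(C_i^+\cdot\xi)$ to show that the two half-anomaly counterterms cancel the term $\kappa_i(C_i^+\cdot\eta)(C_i^+\cdot\xi)\log r_i$ from Theorem~\ref{thm:flop-cov}; your sign bookkeeping and your remark that a change of branch of $\log q_i$ only adds a Novikov-independent constant spell out what the paper's two-line proof leaves implicit.
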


\begin{proof}
Use $q_i=r_i^{-1}$ and the equality of the two rank one tensors under the
dual lattice identification.  The two half anomalies cancel
\eqref{eq:flop-anomaly}.
\end{proof}

\subsection{Classical derivative and the crepant transformation correction}

At $\lambda=0$,
\[
\mathscr H_X^{\rm cl}(\eta,\xi)
=
-
\sum_\beta
n_{\beta,X}
(\beta\cdot\eta)(\beta\cdot\xi)\Li_1(q^\beta).
\]
For $\gamma\in H^2(X)$, define the logarithmic Novikov derivative by
\[
\mathcal D_\gamma q^\beta
=
(\beta\cdot\gamma)q^\beta.
\]
Then
\[
-\mathcal D_\gamma
\mathscr H_X^{\rm cl}(\eta,\xi)
\]
is the nonconstant genus zero three point quantum potential.

Applying $-\mathcal D_\gamma$ to Theorem~\ref{thm:flop-cov} at
$\lambda=0$ gives
\[
\begin{aligned}
&
\Phi_{X_i^+}(\eta,\xi,\gamma)
-
\Phi_X(
\mathsf N_i\eta,
\mathsf N_i\xi,
\mathsf N_i\gamma)
\\
&\qquad=
-\kappa_i
(C_i^+\cdot\eta)
(C_i^+\cdot\xi)
(C_i^+\cdot\gamma),
\end{aligned}
\]
which is the rank one correction in the crepant transformation formula.
Thus the logarithmic Gram anomaly is a primitive of the cubic correction.

\subsection{Iterated flops and the chamber cocycle}

Consider a flop sequence
\[
X_0\dashrightarrow X_1\dashrightarrow\cdots\dashrightarrow X_m.
\]
Let $\mathsf N_s:H^2(X_s)\to H^2(X_{s-1})$ be the divisor transformation
at the $s$-th wall and set
\[
\mathsf N_{[1,m]}=\mathsf N_1\cdots\mathsf N_m.
\]
For $\eta\in H^2(X_m)$, put
\[
\eta^{(s)}=\mathsf N_{s+1}\cdots\mathsf N_m\eta.
\]

\begin{corollary}[Iterated covariance]\label{cor:iterated}
\[
\begin{aligned}
&
\mathscr H_{X_m}(\eta,\xi)
-
\mathscr H_{X_0}
(\mathsf N_{[1,m]}\eta,\mathsf N_{[1,m]}\xi)
\\
&\qquad\equiv
\sum_{s=1}^m
\kappa_s
(C_s^+\cdot\eta^{(s)})
(C_s^+\cdot\xi^{(s)})
\log r_s.
\end{aligned}
\]
\end{corollary}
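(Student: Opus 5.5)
The plan is to prove Corollary~\ref{cor:iterated} by induction on the number $m$ of walls, applying Theorem~\ref{thm:flop-cov} once per wall and telescoping. For $m=1$ the statement is exactly \eqref{eq:flop-anomaly} with $\eta^{(1)}=\eta$, $\xi^{(1)}=\xi$. For the inductive step, suppose the formula holds for the sequence $X_0\dashrightarrow\cdots\dashrightarrow X_{m-1}$. Apply Theorem~\ref{thm:flop-cov} to the last wall $X_{m-1}\dashrightarrow X_m$:
\[
\mathscr H_{X_m}(\eta,\xi)-\mathscr H_{X_{m-1}}(\mathsf N_m\eta,\mathsf N_m\xi)
\equiv\kappa_m(C_m^+\cdot\eta)(C_m^+\cdot\xi)\log r_m .
\]
Then apply the inductive hypothesis to the pair $(\mathsf N_m\eta,\mathsf N_m\xi)\in H^2(X_{m-1})$. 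Its intermediate divisors are $\mathsf N_{s+1}\cdots\mathsf N_{m-1}(\mathsf N_m\eta)=\eta^{(s)}$, and the composite transformation is $\mathsf N_{[1,m-1]}\mathsf N_m=\mathsf N_{[1,m]}$. Adding the two relations, and noting that $\eta^{(m)}=\eta$ is the empty product, yields the claimed sum.

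Two points need care. First, the relation $\equiv$ must be transitive and compatible with the variable changes. Being equal modulo functions independent of all Novikov variables is an equivalence relation, and it is preserved under the monomial changes of Novikov variables \eqref{eq:Novikov-flop}. These changes map constants to constants, so the constants produced at each wall survive the re-expression in the variables of $X_m$.

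Second, each $\log r_s$ is read in the Novikov variables of $X_s$. It must then be transported through the later substitutions $q^\beta=r^{\mathsf M^{-1}\beta}$. I would state explicitly that $\log r_s$ denotes this transported logarithm of a monomial, which is again a logarithm of a monomial in the final variables. This is consistent with how the iterated statement is written, so no extra term appears.

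The main obstacle I expect is analytic rather than algebraic. The inductive hypothesis is an identity of analytic continuations along a path crossing walls $1,\dots,m-1$, and the last step continues further across wall $m$. I would handle this by fixing one path on the simply connected cover of the complement of the pole arrangement that crosses the walls in order, and composing the continuations along it. The logarithm branches are fixed along the same path. Because each anomaly is only defined up to $\equiv$, any branch ambiguity in $\log r_s$ is a constant and is absorbed. With that convention fixed, the telescoping argument is purely formal.
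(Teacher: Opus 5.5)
Your proposal is correct and is essentially the paper's own proof, which simply applies Theorem~\ref{thm:flop-cov} at each wall and telescopes. Your induction on $m$ only makes the bookkeeping explicit: the identities $\mathsf N_{[1,m-1]}\mathsf N_m=\mathsf N_{[1,m]}$ and $\eta^{(m)}=\eta$, the transitivity of $\equiv$ under the monomial Novikov substitutions, and the choice of a single continuation path crossing the walls in order.
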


\begin{proof}
Apply Theorem~\ref{thm:flop-cov} at each wall and telescope.
\end{proof}

Consequently the rank one logarithmic anomalies form an exact additive
wall-crossing cocycle on the marked chamber groupoid.  We do not claim
that this defines a nontrivial cohomology class.

\subsection{Affine wall-crossing transport}

It is useful to package the single wall formula as an affine transition
law.  For an oriented flop edge
\[
e:X_-\dashrightarrow X_+
\]
with divisor transformation $\mathsf N_e$, define
\[
\mathfrak a_e(\eta,\xi)
=
\kappa_e(C_e^+\cdot\eta)(C_e^+\cdot\xi)\log r_e
\]
and set
\begin{equation}\label{eq:affine-transport}
\mathscr T_e(B)(\eta,\xi)
=
B(\mathsf N_e\eta,\mathsf N_e\xi)
+
\mathfrak a_e(\eta,\xi).
\end{equation}
Then Theorem~\ref{thm:flop-cov} is precisely
\[
\mathscr H_{X_+}\equiv\mathscr T_e(\mathscr H_{X_-}).
\]
For a flop path $\mathcal P=e_m\cdots e_1$, define
$\mathscr T_{\mathcal P}=\mathscr T_{e_m}\circ\cdots\circ\mathscr T_{e_1}$.
Corollary~\ref{cor:iterated} shows that the accumulated translation part is
exactly the sum of the transported rank one logarithmic anomalies.
In particular, whenever two marked flop paths connect the same chambers
and induce the same divisor-lattice identification, their accumulated
anomalies agree modulo Novikov-independent terms.  Thus the centered Gram
forcing is a flat section of this affine wall-crossing system.  The word
``flat'' refers only to consistency of the transition maps under chamber
relations; no nontrivial cohomology class is asserted.

\subsection{GV width distribution and moment reconstruction}

We now prove the numerical assertions of Theorem~\ref{thm:width-main} and
make explicit the distinction between the raw GV spectrum and the weighted
width distribution.  Suppose that the flop contracts a smooth irreducible
curve $C$ and write $n_d=n_{d[C]}$.  Toda \cite{TodaWidth} proves
\begin{equation}\label{eq:Toda-width}
\wid(C)
=\dim_{\mathbb C}A_{\rm con}
=\sum_{d\ge1}d^2n_d,
\qquad
\cwid(C)=n_1.
\end{equation}

The sequence $(n_d)$ is the genus zero GV spectrum.  We set
\begin{equation}\label{eq:width-distribution}
w_d(C)=d^2n_d
\end{equation}
and call $(w_d(C))$ the GV width distribution.  Its finite generating
polynomial is
\begin{equation}\label{eq:width-poly}
\mathsf W_C(z)
=\sum_{d\ge1}w_d(C)z^d
=\sum_{d\ge1}d^2n_dz^d.
\end{equation}
Let $\Theta=z\,d/dz$.  Then
\begin{equation}\label{eq:kappa-width}
[z]\mathsf W_C(z)=\cwid(C),
\qquad
\mathsf W_C(1)=\wid(C),
\qquad
(\Theta\mathsf W_C)(1)=\kappa_C.
\end{equation}
Thus there are two equivalent moment conventions worth keeping separate:
from the GV spectrum viewpoint, $\wid(C)$ and $\kappa_C$ are respectively
the second and third moments; from the width distribution viewpoint,
$\wid(C)$ is the total mass and $\kappa_C$ is the first moment.

When $\wid(C)>0$, normalize the width distribution by
\begin{equation}\label{eq:normalized-width}
p_d=\frac{w_d(C)}{\wid(C)}.
\end{equation}
Then $\sum_dp_d=1$, and
\begin{equation}\label{eq:width-mean}
\mathbb E_{\rm wid}[d]
=\sum_ddp_d
=\frac{\kappa_C}{\wid(C)}.
\end{equation}
If $\ell=\max\{d:n_d\ne0\}$, positivity immediately yields
\begin{equation}\label{eq:width-bounds}
1\le\frac{\kappa_C}{\wid(C)}\le\ell,
\qquad
\wid(C)\le\kappa_C\le\ell\,\wid(C).
\end{equation}
This gives a useful interpretation of the anomaly-to-width ratio: it is
the mean GV degree seen by the width distribution.  In particular,
$\kappa_C$ increasingly emphasizes higher degree thickening of the
exceptional curve.

The relation with flop covariance is now transparent.  A degree $d$ BPS
sector contributes the weight $d^2n_d$ after the two divisor directions in
the Gram observable, while analytic continuation across the flopped ray
contributes one further factor of $d$ through
$\log(r^d)=d\log r$.  Hence the wall-crossing coefficient is the first
moment of the width distribution,
\[
\sum_d d\,w_d(C)=\sum_dd^3n_d=\kappa_C,
\]
which is precisely the coefficient in Theorem~\ref{thm:flop-cov}.  This is
the enumerative reason that the rank one anomaly measures the third GV
moment rather than Toda's second moment.

\begin{proof}[Proof of Theorem~\ref{thm:width-main}\textup{(i)--(iii)}]
The identities in part \textup{(i)} follow from
\eqref{eq:Toda-width} and termwise application of $\Theta$ to
\eqref{eq:width-poly}.  Part \textup{(ii)} is
\eqref{eq:normalized-width}--\eqref{eq:width-bounds}.  For part
\textup{(iii)}, write
\[
c=\cwid(C)=n_1,\qquad w=\wid(C),\qquad\kappa=\kappa_C.
\]
If $n_d=0$ for $d\ge3$, then
\[
w=c+4n_2,\qquad \kappa=c+8n_2,
\]
so
\begin{equation}\label{eq:length2-moment}
n_2=\frac{w-c}{4},
\qquad
\kappa=2w-c.
\end{equation}
If $n_d=0$ for $d\ge4$, then
\[
w=c+4n_2+9n_3,
\qquad
\kappa=c+8n_2+27n_3.
\]
Subtracting twice the first equality from the second and adding $c$ gives
$9n_3=\kappa-2w+c$, and substitution gives
\[
n_3=\frac{\kappa-2w+c}{9},
\qquad
n_2=\frac{3w-\kappa-2c}{4}.
\]
This proves the reconstruction formulas and shows that degree three is the
first level at which $\kappa_C$ can carry independent numerical information.
\end{proof}

Two further identities make the same threshold visible without assuming a
support bound:
\begin{equation}\label{eq:moment-differences}
\kappa_C-\wid(C)
=\sum_{d\ge2}d^2(d-1)n_d,
\end{equation}
\begin{equation}\label{eq:moment-differences-2}
\kappa_C-2\wid(C)+\cwid(C)
=\sum_{d\ge3}d^2(d-2)n_d\ge0.
\end{equation}
The second expression vanishes exactly at the numerical level when the
width distribution has no contribution beyond degree two.

\subsection{Examples and sensitivity to higher degrees}

For the standard length two Laufer spectrum $(n_1,n_2)=(5,1)$
\cite{BrownWemyss,Collinucci},
\[
\mathsf W_{\rm L}(z)=5z+4z^2,
\qquad
(\cwid,\wid,\kappa)=(5,9,13).
\]
The normalized width distribution is
\[
(p_1,p_2)=\left(\frac59,\frac49\right),
\qquad
\mathbb E_{\rm wid}[d]=\frac{13}{9}.
\]
Here $\kappa=2\wid-\cwid$, so the anomaly contains no independent numerical
information beyond the two widths.

For the length three $E_6$ spectrum $(n_1,n_2,n_3)=(6,3,1)$
\cite{Collinucci},
\[
\mathsf W_{E_6}(z)=6z+12z^2+9z^3,
\qquad
(\cwid,\wid,\kappa)=(6,27,57).
\]
The reconstruction formulas give
\[
n_3=\frac{57-54+6}{9}=1,
\qquad
n_2=\frac{81-57-12}{4}=3.
\]
Thus the third GV moment becomes genuinely independent as soon as degree
three BPS sectors are allowed.

More generally, if the flopping curve has scheme-theoretic length $\ell$
and the positivity statement $n_d\ge1$ holds for $1\le d\le\ell$, then
\begin{equation}\label{eq:kappa-length-bound}
\kappa_C
\ge
\sum_{d=1}^{\ell}d^3
=
\left(\frac{\ell(\ell+1)}{2}\right)^2.
\end{equation}
Hence the anomaly coefficient is increasingly sensitive to higher degree
thickening of the exceptional curve.

\subsection{Cohomological width series}

The modern BPS package is closely related to stable pair and Hall algebra
curve counting \cite{PandharipandeThomasCurve,PandharipandeThomasStable,BridgelandHall}
and to vanishing cycle GV theory \cite{MaulikToda}.  Davison \cite{Davison}
defines cohomological BPS invariants
\[
\BPS_{A_{\rm con},d}
\]
for the contraction algebra and proves the numerical identity
\begin{equation}\label{eq:Davison-dim}
\dim \BPS_{A_{\rm con},d}=n_d.
\end{equation}
We stress that only this numerical statement is used below; a stronger
identification with a separately defined geometric cohomological GV object
is not required.

Let $K_0(\mathrm{MMHS})$ be the Grothendieck group of monodromic mixed
Hodge structures and define
\begin{equation}\label{eq:coh-width}
\mathbb W_C(z)
=
\sum_{d\ge1}d^2[\BPS_{A_{\rm con},d}]z^d
\in K_0(\mathrm{MMHS})[z].
\end{equation}
The dimension realization gives
\begin{equation}\label{eq:coh-width-dim}
\dim\mathbb W_C(z)=\mathsf W_C(z).
\end{equation}
Its first width moment is
\begin{equation}\label{eq:coh-Gram-moment}
\mathbb K_C
=
\left.\Theta\mathbb W_C(z)\right|_{z=1}
=
\sum_{d\ge1}d^3[\BPS_{A_{\rm con},d}].
\end{equation}
Applying the dimension realization termwise gives
\begin{equation}\label{eq:coh-kappa}
\dim\mathbb K_C=\kappa_C.
\end{equation}
This proves Theorem~\ref{thm:width-main}\textup{(iv)}.

Thus the numerical coefficient controlling the Gram wall-crossing anomaly
is the numerical realization of an intrinsic BPS moment of the contraction
algebra.  Hua--Toda \cite{HuaToda} show, in the smooth irreducible setting,
that the contraction algebra together with its natural $A_\infty$ structure
recovers the genus zero GV invariants.  Hence it determines the full
numerical width polynomial and all of its moments.

The discussion may be summarized by the following hierarchy:
\[
\boxed{
(n_d)
\ \longrightarrow\
(w_d=d^2n_d)
\ \longrightarrow\
\mathsf W_C(z)
\ \longrightarrow\
\kappa_C
\ \longrightarrow\
\mathbb K_C
}.
\]
The first arrow passes from the GV spectrum to the width distribution, the
middle arrows extract numerical width moments, and the last arrow records
the cohomological lift supplied by contraction algebra BPS theory.

\begin{remark}
It is natural to ask for a cohomological enhancement of the centered Gram
forcing whose wall-crossing coefficient is $\mathbb K_C$ itself.  We do
not construct such an enhancement here.
\end{remark}

\section{Cartan reconstruction, ADE foldings and applications}

\subsection{Signed Cartan reconstruction}

Let $\Phi$ be a reduced finite root system with simple roots
$\alpha_1,\ldots,\alpha_r$ and Cartan matrix
\[
A=(a_{ij}),
\qquad
a_{ij}=\langle\alpha_i,\alpha_j^\vee\rangle.
\]
Take the difference directions to be the simple coroots
$\eta_i=\alpha_i^\vee$.

Define the entrywise signed $q$-Cartan profile
\[
A_u=([a_{ji}]_u)_{j,i}.
\]
We use this phrase to avoid conflating $A_u$ with any one of the standard
notions of a deformed Cartan matrix.

For the simple root $\alpha_j$, put
\[
\mathbf c_j(u)
=
([a_{j1}]_u,\ldots,[a_{jr}]_u)^T.
\]

By Theorem~\ref{thm:quantum-Gram}, the primitive coefficient on the
simple-root ray is exactly the rank one matrix in
\eqref{eq:qCartan-row}.  Set
\[
M^{(j)}
=
-[q^{\alpha_j}]\mathbf H.
\]
Since $a_{jj}=2$,
\[
M^{(j)}_{ij}
=
N_{\alpha_j}[a_{ji}]_u[2]_u,
\qquad
M^{(j)}_{jj}
=
N_{\alpha_j}[2]_u^2.
\]

Dividing these two identities gives \eqref{eq:signed-ratio}; taking the
formal germ at $u=1$ gives \eqref{eq:signed-classical}.  This proves the
signed reconstruction assertion of Theorem~\ref{thm:Cartan-main}.
The statement is made for Laurent polynomial profiles, rather than an
arbitrary numerical specialization, because special values of $u$ can
produce accidental degeneracies.

\subsubsection{Global Gram factorization}

Let
\[
D_N=\diag(N_{\alpha_1},\ldots,N_{\alpha_r})
\]
and define the simple sector Gram matrix
\[
\mathcal G(u)
=
-\sum_{j=1}^r
[q^{\alpha_j}]\mathbf H.
\]

Summing the simple-root rank one matrices gives
\eqref{eq:global-Gram}, and hence
\[
\det\mathcal G(u)
=
\left(
\prod_jN_{\alpha_j}
\right)
\det(A_u)^2,
\qquad
\mathcal G(1)=A^TD_NA.
\]
This proves the global factorization assertion of
Theorem~\ref{thm:Cartan-main}.

The closest algebraic analogy is with deformed Cartan matrices in
representation theory; see, for example, \cite{FujitaMurakami}.  The
difference is that the Gromov-Witten sine-square denominator naturally
produces the quadratic Gram object \eqref{eq:global-Gram}.

\subsection{ADE foldings and non-simply-laced applications}

For the standard folded ADE potential,
\[
N_\beta=d_\beta=\frac2{(\beta,\beta)}.
\]
For a simple folded root,
\[
M^{(j)}
=
d_j\,\mathbf c_j(u)\mathbf c_j(u)^T.
\]
Hence
\begin{equation}\label{eq:fold-reconstruct}
d_j
=
\frac{M^{(j)}_{jj}}{[2]_u^2},
\qquad
[a_{ji}]_u
=
[2]_u
\frac{M^{(j)}_{ij}}{M^{(j)}_{jj}}.
\end{equation}

Thus the same primitive coefficient matrix recovers both the root length
and the signed Cartan row.

\subsubsection{\texorpdfstring{$B_n$ versus $C_n$}{B n versus C n}}

For
\[
A_{2n-1}/\mathbb Z_2\to C_n,
\]
the final double bond satisfies
\[
(d_{n-1},d_n)=(2,1),
\qquad
(a_{n-1,n},a_{n,n-1})=(-1,-2).
\]
For
\[
D_{n+1}/\mathbb Z_2\to B_n,
\]
the data are
\[
(d_{n-1},d_n)=(1,2),
\qquad
(a_{n-1,n},a_{n,n-1})=(-2,-1).
\]
The centered Gram forcing therefore distinguishes the two orientations by
the interchange
\[
[-1]_u\longleftrightarrow[-2]_u.
\]

\subsubsection{\texorpdfstring{$F_4$ and $G_2$}{F4 and G2}}

Use
\[
A_{F_4}
=
\begin{pmatrix}
2&-1&0&0\\
-1&2&-1&0\\
0&-2&2&-1\\
0&0&-1&2
\end{pmatrix},
\qquad
D_{F_4}=\diag(2,2,1,1).
\]
The double bond is detected by $[-1]_u=-1$ in one direction and
\[
[-2]_u=-(u+u^{-1})
\]
in the other.

For
\[
D_4/\mathbb Z_3\to G_2,
\]
take
\[
A_{G_2}
=
\begin{pmatrix}
2&-1\\
-3&2
\end{pmatrix},
\qquad
D_{G_2}=\diag(3,1).
\]
The triple incidence is
\[
[-3]_u=-(u^2+1+u^{-2}).
\]
Thus $D_4$ triality is detected independently by the short root orbit
weight $3$ and by the signed Cartan entry $-3$.

\subsubsection{\texorpdfstring{Global $q$-Gram discriminants}{Global q-Gram discriminants}}

Set
\[
s=[2]_u=u+u^{-1}.
\]

For $B_n$ let
\[
A_u(B_n)
=
\begin{pmatrix}
s&-1&&&&\\
-1&s&-1&&&\\
&\ddots&\ddots&\ddots&&\\
&&-1&s&-s\\
&&&-1&s
\end{pmatrix},
\]
and $A_u(C_n)=A_u(B_n)^T$.  Put
\[
\delta_n(u)
=
\det A_u(B_n)
=
\det A_u(C_n).
\]

\begin{proof}[Proof of Theorem~\ref{thm:BC-det}]
Let $P_k$ be the determinant of the leading simply laced tridiagonal
$k\times k$ block.  Then
\[
P_0=1,\qquad P_1=s,\qquad
P_k=sP_{k-1}-P_{k-2},
\]
hence $P_k=[k+1]_u$.  The final double bond contributes the product $s$ of
its off diagonal entries, so
\[
\delta_n=sP_{n-1}-sP_{n-2}
=[2]_u([n]_u-[n-1]_u).
\]
The Laurent form follows by elementary simplification.
\end{proof}

Since
\[
\det D_{B_n}=2,
\qquad
\det D_{C_n}=2^{n-1},
\]
Theorem~\ref{thm:global-Gram} gives
\[
\det\mathcal G_{B_n}(u)
=
2\,\delta_n(u)^2,
\qquad
\det\mathcal G_{C_n}(u)
=
2^{n-1}\delta_n(u)^2.
\]
Thus
\[
\frac{\det\mathcal G_{C_n}(u)}
{\det\mathcal G_{B_n}(u)}
=
2^{n-2}.
\]

For $F_4$,
\[
A_u(F_4)
=
\begin{pmatrix}
s&-1&0&0\\
-1&s&-1&0\\
0&-s&s&-1\\
0&0&-1&s
\end{pmatrix},
\]
and direct computation gives
\[
\det A_u(F_4)
=
s^4-s^3-2s^2+1.
\]
Hence
\[
\det\mathcal G_{F_4}(u)
=
4(s^4-s^3-2s^2+1)^2.
\]

For $G_2$,
\[
A_u(G_2)
=
\begin{pmatrix}
[2]_u&-1\\
-[3]_u&[2]_u
\end{pmatrix}.
\]
Since $[3]_u=[2]_u^2-1$,
\[
\det A_u(G_2)=1
\]
and therefore
\begin{equation}\label{eq:G2-rigidity}
\det\mathcal G_{G_2}(u)=3.
\end{equation}
Thus the $G_2$ simple sector Gram discriminant is rigid under the
$q$-deformation, although its individual entries are not.

\subsection{Discussion and further directions}

The construction developed here has four complementary levels.

First, the half step centered differences lift scalar Fej\'er cancellation
to a matrix-valued $q$-Gram decomposition.  Primitive sectors are detected
by intrinsic rank one conditions, and tensor-valued divisor inversion
recovers the BPS data on each ray.

Second, the Gram observable interacts nontrivially with threefold
birational geometry.  Under a simple flop it is covariant away from the
flopped ray, while the ray contributes a universal logarithmic rank one
anomaly.  This anomaly is a finite $\lambda$ primitive of the cubic
crepant transformation correction.

Third, the width-distribution viewpoint identifies the enumerative content
of that anomaly.  The weighted sequence $w_d=d^2n_d$ has total mass
$\wid(C)$ and first moment $\kappa_C$; equivalently these are the second and
third moments of the underlying GV spectrum.  After normalization,
$\kappa_C/\wid(C)$ is the mean GV degree seen by the width distribution.
This explains both the inequalities in Theorem~\ref{thm:width-main} and
the enhanced sensitivity of the flop anomaly to higher degree BPS sectors.
Davison's cohomological BPS invariants lift the numerical width polynomial
to the intrinsic series $\mathbb W_C(z)$ and the anomaly coefficient to
$\mathbb K_C$.

Fourth, for root-supported theories the primitive Gram matrices recover the
signed Cartan data.  Standard ADE foldings then produce the
non-simply-laced $B$, $C$, $F$ and $G$ structures together with their global
$q$-Gram discriminants.  Thus the same matrix-valued difference observable
simultaneously records enumerative, birational, noncommutative and root-theoretic information.

Several questions remain.  It would be natural to construct a
cohomological refinement of the centered Gram forcing whose simple flop
correction is the cohomological moment
\[
\mathbb K_C
=
\sum_{d\ge1}d^3[\BPS_{A_{\rm con},d}].
\]
It would also be interesting to understand whether the global
$q$-Gram determinants admit a direct interpretation in quantum
representation theory or in the spectral geometry of the movable cone
arrangement.

\section*{Acknowledgements}

The author is supported by the National Natural Science Foundation of China
(NSFC) under Grant Nos.~11501470, 11426187, and 11791240561, and is partially
supported by NSFC Grant No.~11671328, the Chengdu Science and Technology
Program under Grant No.~2025-YF09-00007-SN, and the Fundamental Research
Funds for the Central Universities under Grant Nos.~2682021ZTPY043,
2682025ZTPY001, 2682025ZTPY057, and 2682025ZTO002. The author would like to thank Sung-Soo Kim, Kimyeong Lee, Futoshi Yagi,
Satoshi Nawata and Rui-Dong Zhu for useful discussions. Special thanks are due to
Bohui Chen, An-Min Li, and Guosong Zhao for their constant supports.
The author also gratefully acknowledges the many friends and colleagues
met at various conferences for their friendship, encouragement, and
stimulating conversations. Finally, the author would like to express special thanks to the Mainz
Institute for Theoretical Physics (MITP) of the Cluster of Excellence
PRISMA$^{+}$ (Project ID~390831469) for its hospitality and support.

\end{document}